\numberwithin{equation}{section}
\newtheorem{theorem}{Theorem}[section]
\newtheorem{lemma}[theorem]{Lemma}
\newtheorem{proposition}[theorem]{Proposition}
\theoremstyle{remark}
\newtheorem{remark}[theorem]{Remark}
\theoremstyle{definition}
\newtheorem{definition}[theorem]{Definition}
\newcommand{\abs}[1]{\lvert#1\rvert}
\newcommand{\R}{\mathbb{R}}
\newcommand{\N}{\mathbb{N}}
\newcommand{\cN}{{\mathcal N}}
\let\ge=\varepsilon
\begin{document}

\title[Concentration phenomena]{Concentration phenomena for the
  Schr\"odinger-Poisson system in \protect$\R^2$}

\author{Denis Bonheure \and Silvia Cingolani \and Simone Secchi}

\address[D. Bonheure]{D\'epartement de Math\'ematiques
	\newline\indent
	Universit\'e Libre de Bruxelles, \newline\indent CP 214, Boulevard du Triomphe, B-1050 Bruxelles, Belgium}
\email{denis.bonheure@ulb.ac.be}

\address[S. Cingolani]{Dipartimento di Matematica, \newline\indent
  Universit\`a degli Studi di Bari Aldo Moro, \newline \indent Via
  Orabona 4, 70125 Bari, Italy} \email{silvia.cingolani@uniba.it}

\address[S. Secchi]{Dipartimento di Matematica e Applicazioni
	\newline\indent
        Universit\`a degli Studi di Milano Bicocca, \newline \indent Via Roberto Cozzi 55, 20125 Milano, Italy}
\email{simone.secchi@unimib.it}

\subjclass[2010]{35J20, 35Q55, 35J61, 35Q40, 35B06}
\keywords{Schr\"{o}dinger-Poisson system, nonlocal
	nonlinearity logarithmic potential, semiclassical solutions}

\begin{abstract}
  We perform a semiclassical analysis for the planar
  Schr\"odin\-ger-Poisson system
  \begin{gather}%
    \begin{cases}
      -\varepsilon^{2} \Delta\psi+V(x)\psi= E(x) \psi \quad \text{in $\R^2$},\\
      -\Delta E= |\psi|^{2} \quad \text{in $\R^2$},
    \end{cases} \tag{$SP_\varepsilon$}
  \end{gather}
  where $\varepsilon$ is a positive parameter corresponding to the
  Planck constant and $V$ is a bounded external potential.  We
  detect solution pairs~$(u_\varepsilon, E_\varepsilon)$ of the system
  $(SP_\varepsilon)$ as~$\ge \rightarrow 0$, leaning on a nongeneracy
  result in \cite{BCVS}.
\end{abstract}


\maketitle

\section{Introduction}

We are concerned with the planar Schr\"{o}dinger-Poisson system
\begin{align}%
  \begin{cases}
    -\varepsilon^{2} \Delta\psi+V(x)\psi= E(x) \psi \quad \text{in $\R^2$},\\
    -\Delta E= |\psi|^{2} \quad \text{in $\R^2$},
  \end{cases}
  \label{sys:s-n}%
\end{align}
which presents some special features, because of the different nature
of the Newtonian potential in two-dimensional space.  This system has
been derived in \(\mathbb{R}^3\) by R. Penrose in \cite{pe2} in his
description of the self-gravitational collapse of a quantum mechanical
system (see also \cite{pe1,pe3,mt,mt0}).  The rigorous mathematical
study of the nonlinear Schr\"odinger equation with nonlocal
nonlinearity, involving a Coulomb type convolution potential, dates
back to the seminal papers by Lieb \cite{lieb} and Lions
\cite{l2}. Successively in \cite{WW} Wei and Winter studied the
semiclassical limit for the Schr\"odinger-Poisson system, after
showing the nondegeneracy of the least energy solutions of a related
limiting system (see also \cite{L}). We also mention the papers
\cite{CCS,CSS,CT,MVS} where variational and topological methods have
been employed to derive concentration phenomena for generalized NLS
equations with more general nonlocal nonlinearity in dimensional
$d \geq 3$, where the nondegeneracy properties of the linearized
operators do not hold.

The rigorous study of the Schr\"{o}dinger-Poisson system in $\R^2$
remained open for long time, since it appears more delicate.
Differently from the Coulomb potential, the Newton potential in $\R^2$
is sign-changing and it presents singularities at zero and
infinity. Moreover we recall that the Poisson equation
$-\Delta E= |\psi|^{2}$ determines the solution~$E\colon \R^2 \to \R$
only up to harmonic functions, and every semibounded harmonic function
is costant in $\R^2$.  Therefore if $\psi \in L^\infty(\R^2)$ and $E$
solves the Poisson equation under suitable additional assumption at
infinity, such as $E(x) \to - \infty$ as $|x| \to + \infty$, then we
have $E(x)= \Phi_\psi(x) +c$, where $c$ is a constant and $\Phi_\psi$
is the convolution of fundamental solution of $-\Delta$ in $\R^2$ with
$|\psi|^2$, namely
$\Phi_\psi(x)= \frac{1}{2 \pi} \int_{\R^2} \log \frac{1}{|x-y|}
|\psi(y)|^2 dy$.

In literature, apart from some numerical results in \cite{HMT},
existence and uniqueness results of spherically symmetric solutions of
\eqref{sys:s-n} were proved by Stubbe and Vuffray \cite{CSV}, for $V \equiv 1$, using
shooting methods for the associated ODE system (see also \cite{CS} for
the one-dimensional case).

In \cite{M} Masaki proved a global well-posedness of the Cauchy
problem for \eqref{sys:s-n} in a subspace of $H^1(\R^2)$,
where~$E(x)=\frac{1}{2 \pi} \int_{\R^2} \log \frac{|y|}{|x-y|}
|\psi(y)|^2 dy$, which means \(E(0)=0\).

\medskip

In the more natural case, \(E\) coincides with the Newtonian potential
\(\Phi\) of \(|\psi|^2\), the Schr\"odinger-Poisson system with a
constant potential can be written as the following Schr\"odinger
equation with a nonlocal nonlinearity:
\begin{equation} \label{key56log2bis3}
  - \Delta u + u = \frac{1}{2
    \pi} \left[\log \frac{1}{|\cdot|} \star \abs{u}^2 \right] u ,
  \quad x \in \R^2.
\end{equation} 
For such an integro-differential equation, unlike the 3D case, the
applicability of variational tools is not straightforward, because the
usual Sobolev spaces do not provide a good environment to work in.  In
\cite{Stubbe} Stubbe tackled this problem by setting a suitable
variational framework for \eqref{key56log2bis3} within the space
\begin{gather*}
  X= \left\{u \in H^1(\R^2)\mid \int_{\R^2}\log(1+|x|) |u(x)|^2 \,dx <
    \infty \right\},
\end{gather*}
endowed with the norm
\begin{align*}
  \|u\|^2_X= \int_{\mathbb{R}^2} \left( |\nabla u|^2 + |u|^2 \right) \, dx + \int_{\R^2}\log(1+|x|) |u(x)|^2 \,dx.
\end{align*}
The space $X$ provides a reasonable variational framework, but its
norm does not detect the invariance of the problem under translations;
furthermore the quadratic part of the energy functional associated to
\eqref{key56log2bis3} is not coercive on \(X\).  These difficulties
enforced the implementation of new variational ideas and estimates to
treat nonlinear Schr\"odinger equation with nonlocal nonlinearities
involving logarithmic type convolution potential \cite{CJ,CW,DW}. In
particular in \cite{CW}, the authors proved the existence result of an
unique positive ground state solution $U$ to
\eqref{key56log2bis3}. Sharp asymptotics and the nondegeneracy of the
ground state solution $U$ has been proved in \cite{BCVS}.

In the present paper we study the existence of solution pairs of the
Schr\"odinger-Poisson system as the parameter
$\varepsilon \rightarrow 0^+$.  This study presents some new aspects
with respect to the 3D case, since the Newtonian potential in $\R^2$
does not scale algebraically.

The semiclassical analysis remained in the background until very
recent years and, to the best of our knowledge, it has only been
treated by Masaki in~\cite{M} via WKB approximation.

Here we adapt some pertubation method developed in \cite{ABC,AM} in
the variational framework $X$ where the norm depends on
the weight~$x \mapsto \log(1+ |x|)$.  This makes it more involved to apply a finite
dimensional reduction.

\bigskip

In the rest of the paper we will consider a potential
function $V \colon \mathbb{R}^2 \to \mathbb{R}$ satisfying the
following condition:
\begin{itemize}
\item[(V)] $V \in \mathrm{C}^2(\mathbb{R}^2)$, $\inf_{x \in \mathbb{R}^2} V(x) >0$ and
  \begin{gather*}
  \sup_{x \in \mathbb{R}^2} \left[
   \left| V(x) \right| + \sum_{j=1}^2 \left| \partial_j V(x) \right| + \sum_{i,j=1}^2 \left|\partial^2_{ij} V(x) \right| \right] <+\infty.
  \end{gather*}
\end{itemize}

\bigskip

Setting $v(x) = {\varepsilon} \psi(x)$, the system~\eqref{sys:s-n} can
be written
\begin{align}%
  \begin{cases}
    -\varepsilon^{2}\Delta v +{V}(x)v = E v
    \quad \text{in $\R^2$}, \\
    -\varepsilon^{2}\Delta{E}= |v|^{2} \quad \text{in $\R^2$}.
  \end{cases}
  \label{sys:s-n-red90}%
\end{align}
Our main existence result can be summarized as follows.
\begin{theorem} \label{th:main}
  Suppose that $V$ satisfies (V) and has a non-degenerate critical
  point $x_0$, i.e. $\nabla V(x_0)=0$ and $D^2 V(x_0)$ is either
  positive- or negative-definite. Then, for every $\ge>0$ sufficiently
  small, the system \eqref{sys:s-n-red90} possesses a solution
  $(v_\varepsilon, E_\varepsilon)$ such that
	\begin{gather*}
          v_\ge(x) \simeq U \left(\frac{x-x_0}{\varepsilon}\right),
          \quad E_\varepsilon (x)= \frac{1}{\varepsilon^2}\int_{\R^2}
          \log \frac{\varepsilon}{|x-z|} |v_\varepsilon(z)|^2 \,dz
	\end{gather*}
	where $U$ is the unique (up to translations) positive ground
        state solution of the limiting equation
	\begin{gather} \label{key56log2bis44}
          - \Delta u + V(x_0) u = \frac{1}{2 \pi} \left[\log
            \frac{1}{|\cdot|} \star \abs{u}^2 \right] u , \quad x \in
          \R^2.
	\end{gather}
\end{theorem}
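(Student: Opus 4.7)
The plan is to carry out a Lyapunov--Schmidt / finite-dimensional reduction in the spirit of Ambrosetti--Badiale--Cingolani and Ambrosetti--Malchiodi, using as unperturbed critical manifold the family of translates of the ground state $U$ of \eqref{key56log2bis44} and exploiting the nondegeneracy of $U$ established in \cite{BCVS}.

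\textbf{Setup.} First I would reduce \eqref{sys:s-n-red90} to a single nonlocal equation by computing $E_\varepsilon$ in closed form. The ansatz $E_\varepsilon(x)=\varepsilon^{-2}\int_{\R^2}\log(\varepsilon/|x-z|)|v|^2\,dz$ selects a particular primitive of the 2D Poisson equation; the constant $\log\varepsilon$ inside the kernel is the crucial gauge choice, chosen so that after the dilation $u(\xi)=v(x_0+\varepsilon\xi)$ the resulting equation on $\R^2$ reads
\begin{equation*}
  -\Delta u + V(x_0+\varepsilon\xi)\,u \;=\; \frac{1}{2\pi}\Bigl[\log\tfrac{1}{|\cdot|}\star|u|^2\Bigr]u,
\end{equation*}
i.e.\ the logarithmic factors generated by scaling absorb precisely into the normalization of $E_\varepsilon$. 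This equation is variational on the space $X$ with associated functional
\begin{equation*}
 I_\varepsilon(u)=\tfrac12\!\int_{\R^2}\!\bigl(|\nabla u|^2+V(x_0+\varepsilon\xi)u^2\bigr)\,d\xi-\tfrac{1}{8\pi}\!\int\!\!\int\log\tfrac{1}{|\xi-\zeta|}u^2(\xi)u^2(\zeta)\,d\xi\,d\zeta,
\end{equation*}
and at $\varepsilon=0$ the critical set of $I_0$ (with frozen potential $V(x_0)$) is exactly the two-dimensional manifold $\mathcal{Z}=\{U(\cdot-y):y\in\R^2\}$.

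\textbf{Reduction.} Using the nondegeneracy result of \cite{BCVS}, the linearization $L_0$ of $I_0''(U)$ is a Fredholm operator on $X$ whose kernel is spanned by $\partial_1U,\partial_2U$. For $y$ in a neighbourhood of $x_0$, I would seek a solution of the perturbed equation in the form $u_{\varepsilon,y}=U(\cdot-y/\varepsilon)+w$, with $w$ orthogonal (in the natural inner product on $X$) to $T_{U(\cdot-y/\varepsilon)}\mathcal{Z}$. Projecting $I_\varepsilon'(u_{\varepsilon,y})$ onto this orthogonal subspace yields an equation that, by invertibility of $L_0$ there, can be solved by the implicit function theorem for $w=w_\varepsilon(y)$, with quantitative smallness of order $\varepsilon$. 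The parallel projection then reduces the problem to finding a critical point of the finite-dimensional map $\Phi_\varepsilon(y):=I_\varepsilon(u_{\varepsilon,y})$.

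\textbf{Expansion and conclusion.} A Taylor expansion of the potential $V$ around $x_0$ together with estimates on $w_\varepsilon(y)$ should give
\begin{equation*}
 \Phi_\varepsilon(y)=I_0(U)+\tfrac{\varepsilon^2}{2}\bigl(V(y)-V(x_0)\bigr)\|U\|_{L^2}^2+o(\varepsilon^2)
\end{equation*}
in $C^1_{\mathrm{loc}}(\R^2)$; since $x_0$ is a non-degenerate critical point of $V$, a standard degree/Brouwer argument produces $y_\varepsilon\to x_0$ with $\nabla\Phi_\varepsilon(y_\varepsilon)=0$, and undoing the rescaling yields the pair $(v_\varepsilon,E_\varepsilon)$ of the theorem. \emph{The main obstacle} is implementing this reduction in the weighted space $X$: the weight $\log(1+|x|)$ breaks translation invariance, so the translates $U(\cdot-y/\varepsilon)$ do not lie in a single copy of $X$ with uniformly bounded norm, and one has to absorb the growing $\log$-weight contributions into the expansion. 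This forces careful control of the nonlocal term $\iint\log\frac{1}{|\xi-\zeta|}U(\xi-y/\varepsilon)^2w(\zeta)^2\,d\xi\,d\zeta$ (and its variants) through weighted Hardy--Littlewood--Sobolev--type estimates, and a case-by-case analysis when the weighted log kernel changes sign. Once these estimates are available with the right $\varepsilon$-order, the rest of the argument proceeds as in the classical perturbative scheme.
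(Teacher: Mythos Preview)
Your overall strategy---Lyapunov--Schmidt reduction on the manifold of translates of $U$, using the nondegeneracy from \cite{BCVS}---is exactly the paper's approach. The one substantive difference is your parametrization: you center the approximate solution at $y/\varepsilon$ with $y$ ranging over a fixed neighborhood of $x_0$, which forces the translate parameter to blow up as $\varepsilon\to0$ and creates precisely the ``main obstacle'' you describe (unbounded $X$-norms of the translates). The paper sidesteps this entirely by parametrizing instead by $\xi$ in a \emph{bounded} ball $|\xi|\le M$ in the rescaled variable, so that $z_\xi=U(\cdot-\xi)$ stays uniformly in $X$; the concentration point in original coordinates is then $\varepsilon\xi(\varepsilon)\to0=x_0$ a posteriori. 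With this choice the weighted-norm issue disappears and no ``absorption of growing $\log$-weights'' is needed---the only delicate analytic point is showing that $I''(U)$ is Fredholm on $X$ (done by writing it as an invertible operator plus a compact one) and that the coercivity of $I''(z_\xi)$ on $(\operatorname{span}\{z_\xi,\partial_1 z_\xi,\partial_2 z_\xi\})^{\perp_X}$ is uniform for $|\xi|\le M$. Correspondingly, the reduced functional is not your $\Phi_\varepsilon(y)$ but $\Theta_\varepsilon(\xi)=I_\varepsilon(z_\xi+w)$, and its expansion at order $\varepsilon^2$ is governed by $\Gamma(\xi)=\int_{\R^2}D^2V(x_0)[x,x]\,|z_\xi(x)|^2\,dx$, for which $\xi=0$ is a strict minimum (or maximum). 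Note also that in your parametrization the correct expansion would be $\Phi_\varepsilon(y)=I_0(U)+\tfrac12(V(y)-V(x_0))\|U\|_2^2+O(\varepsilon^2)$, without the extra $\varepsilon^2$ factor you wrote; this does not affect the conclusion but is worth correcting.
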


\bigskip

\begin{remark}
  In Theorem \eqref{th:main} we have
  $E_\varepsilon(x) = \varepsilon^{-2} \Phi_{v_\varepsilon}(x)
  + c_\varepsilon$ where
  $\Phi_{v_\varepsilon}(x) = \log \frac{1}{|\cdot |} \star
  v_\varepsilon^2$ and
  $c_\varepsilon = \varepsilon^{-2} \log \varepsilon\|
  v_\varepsilon\|^2_2$.  Coming back to the system~\eqref{sys:s-n}, we
  derive the existence of the solution pair
  $\left(\varepsilon^{-1} v_\varepsilon, E_\varepsilon \right)$ for
  $\varepsilon >0$ small.
\end{remark}

\section{Functional setting}

Without loss of generality, we will assume that $x_0=0$ and $V(0)=1$.
Setting $u(x)= v(\varepsilon x)$ and $\omega (x)= {E}(\ge x)$, the
system \eqref{sys:s-n-red90} becomes
\begin{align}%
  \begin{cases}
    -\Delta u+ V(\varepsilon x) u = \omega(x)  u  \quad \quad \text{in $\R^2$},\\
    -\Delta \omega = |u|^{2} \quad \quad \text{in $\R^2$}.
  \end{cases}
  \label{sys:s-n-red2}%
\end{align}
The second equation in~\eqref{sys:s-n-red2} can be explicitly solved
with respect to $\omega$. Choosing \(\omega\) as the convolution of
the fundamental solution of \(-\Delta\) in \(\R^2\) with \(|u|^2\),
this system can be written as the single nonlocal equation
\begin{align} \label{key56}
  - \Delta u + V(\varepsilon x) u =
  \frac{1}{2 \pi} \left[\log \frac{1}{|\cdot|} \star \abs{u}^2 \right]
  u , \quad x \in \R^2.
\end{align}
We consider the functional space
\begin{align*}
	X = \left\{ u \in H^1(\mathbb{R}^2) \mid |u|_{*}<+\infty \right\},
\end{align*}
where
\begin{align*}
	\left| u \right|_{*}^2 = \int_{\mathbb{R}^2} \log
	\left(1+|x| \right) |u(x)|^2 \, dx.
\end{align*}
We endow \(X\) with the norm
\begin{gather*}
	\|u\|_X^2 = \|u\|_{H^1}^2 + |u|_{*}^2
\end{gather*}
and the associated scalar product
\begin{gather*}
  \langle u \mid v \rangle_X = \int_{\mathbb{R}^2} \left[ \nabla u \cdot
    \nabla v + uv \right] \, dx + \int_{\mathbb{R}^2} \log (1+|x|)
  u(x) v(x) \, dx.
\end{gather*}
The norms in $H^1(\mathbb{R}^2)$ and $L^{q}(\mathbb{R}^2)$ will be denoted by $\Vert \cdot \Vert_{H^1}$ and $|\cdot |_q$, respectively.

It is known that the space $X$ is compactly embedded in $L^p(\R^2)$ for any $p \in [2, +\infty)$ (cf. Lemma 2.2 in \cite{CW}).
 
Solutions to \eqref{key56} correspond to critical points of the energy
functional \(I_\varepsilon \colon X \to \mathbb{R}\)
defined by
\begin{gather*} 
  I_\varepsilon(u) = \frac{1}{2} \int_{\mathbb{R}^2} |\nabla u|^2 +
  V_\varepsilon |u|^2 \, dx 
  - \frac{1}{8\pi} \int_{\mathbb{R}^2 \times
    \mathbb{R}^2} \log \left( \frac{1}{|x-y|} \right) |u(x)|^2
  |u(y)|^2 \, dx\, dy,
\end{gather*}
where we set $V_\varepsilon(x) = V(\varepsilon x)$.

We observe that 
\begin{gather} \label{eq:normaV}
\|u\|^2 = \int_{\mathbb{R}^2} \left[ |\nabla u|^2 +
  V_\varepsilon |u|^2 \right] \, dx + |u|_*^2
\end{gather}
can be considered as an equivalent norm on $X$ by virtue of assumption (V).
The functional $I_\varepsilon$ fails to be continuous on the Sobolev
space $H^1(\mathbb{R}^2)$. On the contrary, arguing as in \cite[Lemma 2.2]{CW}, we can infer the following regularity result on $X$.

\begin{proposition}
  If $V$ satisfies (V), then $I_\varepsilon$ is a functional of class
  $C^2$ on $X$.
\end{proposition}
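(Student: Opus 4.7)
The plan is to split $I_\varepsilon$ into a manifestly smooth quadratic piece and a quartic nonlocal piece, and to treat the latter via a decomposition of the logarithmic kernel that separates the diagonal singularity from the growth at infinity. The quadratic part $Q(u)=\tfrac12\int_{\R^2}(|\nabla u|^2+V_\varepsilon u^2)\,dx$ is a bounded quadratic form on $X$ since \eqref{eq:normaV} is an equivalent norm under (V); hence $Q\in C^\infty(X,\R)$. All the difficulty lies in the quartic term
\begin{gather*}
  N(u)=\int_{\R^2\times\R^2}\log\frac{1}{|x-y|}\,|u(x)|^2|u(y)|^2\,dx\,dy.
\end{gather*}

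Following the framework of \cite{CW}, I would use the algebraic identity
$-\log|x-y|=\log(1+1/|x-y|)-\log(1+|x-y|)$
to write $N=N_2-N_1$, with
\begin{gather*}
  N_1(u)=\int_{\R^2\times\R^2}\log(1+|x-y|)\,u^2(x)u^2(y)\,dx\,dy, \\
  N_2(u)=\int_{\R^2\times\R^2}\log\!\left(1+\frac{1}{|x-y|}\right)u^2(x)u^2(y)\,dx\,dy.
\end{gather*}
Since $1+|x-y|\le(1+|x|)(1+|y|)$, one gets $\log(1+|x-y|)\le\log(1+|x|)+\log(1+|y|)$ and therefore
\begin{gather*}
  0\le N_1(u)\le 2\,|u|_*^2\,|u|_2^2\le C\|u\|_X^4.
\end{gather*}
For $N_2$, the kernel is bounded at infinity and has an integrable logarithmic singularity at the diagonal: $\log(1+1/r)\le C_\alpha r^{-\alpha}$ for small $r>0$ and any $\alpha\in(0,1)$. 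The Hardy-Littlewood-Sobolev inequality then yields $|N_2(u)|\le C\,|u|_p^4$ for a suitable $p\in(2,\infty)$, which by the compact embedding $X\hookrightarrow L^p(\R^2)$ recalled before the statement is dominated by $C\|u\|_X^4$.

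Polarizing these quartic forms yields continuous $4$-linear maps on $X$. A direct computation then produces
\begin{gather*}
  I'_\varepsilon(u)[v]=\int_{\R^2}(\nabla u\cdot\nabla v+V_\varepsilon uv)\,dx-\frac{1}{2\pi}\int_{\R^2\times\R^2}\log\frac{1}{|x-y|}\,u(x)v(x)u^2(y)\,dx\,dy,
\end{gather*}
and an analogous expression for $I''_\varepsilon(u)[v,w]$ involving two trilinear integrals with the same kernel. Applying the split of the kernel to each such multilinear term, the "$N_1$-piece" is controlled through the inequality $\log(1+|x-y|)\le\log(1+|x|)+\log(1+|y|)$ by the weighted seminorm $|\cdot|_*$, while the "$N_2$-piece" is controlled by Hardy-Littlewood-Sobolev in terms of $L^p$ norms that are in turn dominated by $\|\cdot\|_X$. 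Continuity of $u\mapsto I'_\varepsilon(u)\in X^*$ and of $u\mapsto I''_\varepsilon(u)$ as a bounded bilinear form on $X$ then follows from multilinearity and these uniform estimates.

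The main obstacle is the absence of any single ambient norm controlling the logarithmic kernel: the kernel is sign-changing and cannot be dominated on $\R^2\times\R^2$ by a tame function, so the Riesz-type arguments used in the Coulomb setting on $\R^3$ are unavailable. The decomposition $N=N_2-N_1$ is precisely what bypasses this issue, and it is the very reason that the weighted space $X$, rather than $H^1(\R^2)$, is forced upon us. Once the split is in hand, the estimates for $I'_\varepsilon$ and $I''_\varepsilon$ are essentially routine and follow \cite[Lemma 2.2]{CW} verbatim; the only adjustment is the replacement of the constant mass term by $V_\varepsilon$, which by (V) contributes a continuous quadratic form yielding the equivalent norm in \eqref{eq:normaV}.
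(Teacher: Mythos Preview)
Your proposal is correct and follows exactly the route the paper takes: the paper gives no detailed proof but simply defers to \cite[Lemma 2.2]{CW}, and your outline (the splitting $-\log|x-y|=\log(1+1/|x-y|)-\log(1+|x-y|)$, control of the $N_1$-piece by $|\cdot|_*$ via $\log(1+|x-y|)\le\log(1+|x|)+\log(1+|y|)$, and control of the $N_2$-piece by HLS in $L^{8/3}$) is precisely the content of that lemma, with the harmless replacement of the constant coefficient by $V_\varepsilon$ handled by~(V). There is nothing to add.
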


\section{Limiting Equation}

We consider the planar integro-differential equation
\begin{align}
  \label{prob2}
  - \Delta u +  u = \frac{1}{2 \pi}  \Bigl[\log \frac{1}{|\cdot|} \star \abs{u}^2 \Bigr] \ u, \qquad \text{in $\R^2$},
\end{align}
which has the r\^{o}le of a \emph{limiting problem} for \eqref{key56}.
We define the energy functional $I\colon X \to \R$ associated to
\eqref{prob2}:
\begin{align*}
  I (u) = \frac{1}{2} \|u\|_{H^1}^2 +  \frac{1}{8 \pi} \int_{\R^2 \times
  \R^2} \log(|x-y|) |u(x)|^2 |u(y)|^2 \,dx \, dy .
\end{align*}
For future reference, we introduce some shorthand: let us set
\begin{align*}
  B(f,g) = -\frac{1}{2\pi} \int_{\mathbb{R}^2 \times \mathbb{R}^2} \log
  |x-y|\ f(x) g(y) \, dx\, dy,
\end{align*}
so that
\begin{align*}
  I(u) = \left\| u \right\|_{H^1}^2 - \frac{1}{4} B(u^2,u^2).
\end{align*}
It follows from  \cite[Lemma 2.2]{CW}  that $I$ is of class $C^2$
and that
\begin{align*}
  I'(u)[\varphi] &= \int_{\mathbb{R}^2} \left[ \nabla u \cdot \nabla \varphi
                   + u \varphi\right] - B(u^2,u\varphi) \\
  I''(u)[\varphi,\psi] &= \int_{\mathbb{R}^2} \left[ \nabla \varphi \cdot
                         \nabla \psi + \varphi \psi \right] - B(u^2,\varphi
                         \psi) - 2 B(u\varphi,u\psi).
\end{align*}
It has been proved in \cite[Theorem 1.1]{CW} that the restriction of $I$ to the
associated Nehari manifold
\begin{align*}
  \cN = \left\{u \in X \setminus \{0\} \mid I'(u)[u]=0 \right\}
\end{align*}
attains a global minimum. Moreover, every minimizer $u \in \cN$ of
$I_{|\cN}$ is a solution of\eqref{prob2} which does not change sign
and obeys the variational characterization
\begin{align*}
  I(u) = \inf_{u \in X} \sup_{t \in \R} I(tu).
\end{align*}

From \cite[Theorem 1.3]{CW} we have the following result.

\begin{theorem} \label{th:3.1} Every positive solution~$u \in X$ of
  \eqref{prob2} is radially symmetric up to translation and strictly
  decreasing in the distance from the symmetry center. Moreover $u$ is
  unique, up to translation in $\R^2$.
\end{theorem}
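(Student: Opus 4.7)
The plan is to prove the three assertions in order: radial symmetry up to translation by the moving plane method in integral form, strict radial monotonicity from the reduced equation, and uniqueness via a shooting argument for positive radial solutions.

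For symmetry, I would rewrite \eqref{prob2} in integral form by convolving with the Bessel kernel $G$ of $-\Delta+1$:
\[
u(x) = \int_{\R^2} G(x-y)\, W_u(y)\, u(y)\, dy, \qquad W_u(y) = \frac{1}{2\pi}\int_{\R^2}\log\frac{1}{|y-z|}u(z)^2\,dz.
\]
For a fixed direction $e_1$ and $\lambda\in\R$, let $\Sigma_\lambda=\{x_1<\lambda\}$, $x^\lambda$ the reflection across $T_\lambda=\{x_1=\lambda\}$, and $u_\lambda(x)=u(x^\lambda)$. The elementary but crucial observation is that for $x,y\in\Sigma_\lambda$ one has $|y-x|<|y-x^\lambda|$, hence $\log(1/|y-x|)>\log(1/|y-x^\lambda|)$. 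Splitting $W_u(x)-W_u(x^\lambda)$ over $\Sigma_\lambda$ and its complement and using $y\mapsto y^\lambda$ in the latter, this difference collapses to an integral over $\Sigma_\lambda$ of $\bigl(\log(1/|y-x|)-\log(1/|y-x^\lambda|)\bigr)\bigl(u(y)^2-u_\lambda(y)^2\bigr)$, whose sign is controlled by $u^2-u_\lambda^2$ on $\Sigma_\lambda$. Combined with the analogous monotonicity of $G$ and the decay of $u\in X$, this sets up the standard moving plane machinery for nonlocal integral equations (in the spirit of Chen--Li--Ou). Starting at $\lambda\to+\infty$ with $u_\lambda\geq u$ trivially and sliding $\lambda$ down to a critical value where equality is forced, one obtains reflection symmetry along $e_1$; arbitrariness of $e_1$ yields a common centre.

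Strict monotonicity in $r$ then follows from the radial reduction $-u''-u'/r+u=\omega(r)u$ together with $\omega'(r)=-r^{-1}\int_0^r s\, u(s)^2\,ds<0$, via the strong maximum principle. For uniqueness of positive radial solutions, I would use a Kwong--McLeod-type shooting argument: parametrize solutions by $u(0)=\alpha>0$, note that $\omega$ is then determined by $u$ through its integral representation, and show that the asymptotic requirement $u\in X$ selects a single value of $\alpha$ by a monotonicity-in-$\alpha$ argument combined with the variational characterization of the ground state from \cite[Theorem 1.1]{CW}. The main obstacle is the sign-changing nature of the log kernel, which invalidates both Riesz rearrangement and classical maximum principle comparisons and forces the moving plane step to rely solely on the pointwise monotonicity of $r\mapsto\log(1/r)$; the nonlocal coupling similarly complicates the shooting argument, since the usual Wronskian and Sturm comparison techniques do not directly apply without careful refinement.
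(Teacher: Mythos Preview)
The paper itself does not prove this theorem: the statement is quoted from \cite[Theorem~1.3]{CW}, with no argument given. Any comparison must therefore be with the proof in \cite{CW}.

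Your symmetry step is in the same spirit as \cite{CW}, which also proceeds by moving planes; recasting the equation via the Bessel kernel is a legitimate variant of the differential-form argument used there. One technical point you skip: membership in $X$ only gives $\int\log(1+|x|)\,u^2<\infty$, not pointwise decay, so before you can ``start at $\lambda\to+\infty$ with $u_\lambda\geq u$ trivially'' you need a preliminary regularity-and-decay step (elliptic estimates combined with the logarithmic growth of $W_u$ at infinity). Strict radial monotonicity then comes out of the moving-plane sweep itself, not from a separate maximum-principle argument on the reduced ODE.

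The real gap is uniqueness. A Kwong--McLeod shooting scheme does not transplant to this equation in any routine way: the potential $\omega$ depends on the entire profile of $u$, so the radial problem is not a well-posed initial-value ODE at $r=0$, and the Wronskian and Sturm comparisons that drive Kwong's argument have no direct analogue here. You concede this but offer no replacement mechanism, and the appeal to ``the variational characterization of the ground state from \cite[Theorem~1.1]{CW}'' does not by itself rule out further positive radial solutions at higher energy. The proof in \cite{CW} proceeds quite differently: it exploits the special behaviour of the logarithmic convolution under dilation --- the identity $\log(\lambda r)=\log\lambda+\log r$ converts a rescaling of $u$ into an additive shift in the nonlocal term --- together with the Nehari-manifold structure, to compare two positive radial solutions directly and force them to coincide. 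Without an idea of comparable strength, your uniqueness argument remains incomplete.
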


Moreover, from \cite[Theorem 1]{BCVS},  the sharp asymptotics of the radially symmetric positive solution of \eqref{prob2} are known.
\begin{theorem} \label{th:3.2}
  If $u \in X$ is a radially symmetric positive solution of
  \eqref{prob2}, there exists~$\mu>0$ such that, as $|x| \to +\infty$,
  \begin{gather*}
    u(x) = \frac{\mu +o(1)}{\sqrt{\strut |x|} (\log |x|)^{1/4}} \exp \left( -\sqrt{M} \mathrm{e}^{-1/M} \int_1^{|x| \mathrm{e}^{1/M}} \sqrt{\log s} \, ds \right),
  \end{gather*}
  where $M= (2\pi)^{-1} \int_{\mathbb{R}^2} |u|^2 \, dx$.
\end{theorem}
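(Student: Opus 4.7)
The plan is to derive the asymptotics from a Liouville--Green (WKB) analysis of the radial ODE satisfied by $u$, after identifying the leading behaviour of the nonlocal potential~$\Phi_u$.

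\medskip

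\textbf{Step 1: Leading asymptotics of the convolution potential.} By Theorem~\ref{th:3.1}, the positive solution $u$ is radial and strictly decreasing; in particular $u \in L^2(\R^2)$ has fast enough decay for the arguments below. For $r=|x| \to +\infty$, I would split
\begin{gather*}
\Phi_u(x) = \frac{1}{2\pi}\int_{\R^2} \log\frac{1}{|x-y|}\,|u(y)|^2\,dy
\end{gather*}
into the regions $\{|y|\le r/2\}$ and $\{|y|>r/2\}$. On the first piece, $\log|x-y|=\log|x|+O(|y|/|x|)$; on the second, control is provided by the tail of $|u|^2$. Together these give
\begin{gather*}
\Phi_u(x) = -M\log r + c_{\infty} + o(1), \qquad r\to\infty,
\end{gather*}
where $M=(2\pi)^{-1}\int_{\R^2}|u|^2\,dx$ and $c_\infty$ is a constant.

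\medskip

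\textbf{Step 2: Reduction to a scalar ODE and Liouville--Green substitution.} The radial form of \eqref{prob2} reads $-u''-r^{-1}u'+(1-\Phi_u)u=0$, so Step~1 yields
\begin{gather*}
-u''(r)-\frac{1}{r}u'(r)+\bigl(1+M\log r+c_\infty+o(1)\bigr)u(r)=0.
\end{gather*}
Setting $w(r)=\sqrt{r}\,u(r)$ removes the first-order term and produces
\begin{gather*}
w''(r)=Q(r)\,w(r), \qquad Q(r)=1+M\log r-\frac{1}{4r^2}+c_\infty+o(1).
\end{gather*}
Since $Q(r)\to +\infty$ with $|Q'|/Q^{3/2}\to 0$ and $(Q^{-1/4})''/\sqrt{Q}$ integrable at infinity, classical Liouville--Green theory yields two independent solutions, with the subdominant one of the form
\begin{gather*}
w(r)=\bigl(C+o(1)\bigr)\,Q(r)^{-1/4}\exp\!\Bigl(-\!\int_{r_0}^{r}\!\sqrt{Q(s)}\,ds\Bigr).
\end{gather*}
Because $u$ is positive and in $L^2$, it must coincide (up to a constant) with this decaying branch, while the dominant exponential solution is excluded. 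The final form of the theorem is obtained by using $Q(r)^{1/4}\sim (M\log r)^{1/4}$, transforming the prefactor $r^{-1/2}(\log r)^{-1/4}$ and applying the substitution $s=t e^{1/M}$, which yields
\begin{gather*}
\int_{?}^{r}\!\sqrt{1+M\log t}\,dt = \sqrt{M}\,e^{-1/M}\!\int_{?}^{r e^{1/M}}\!\sqrt{\log s}\,ds
\end{gather*}
up to constants absorbed in $\mu$.

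\medskip

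\textbf{Main obstacle.} The delicate step is the rigorous Liouville--Green analysis in the presence of the rough perturbation $c_\infty+o(1)$ coming from $\Phi_u$. The standard way is to write $w=Q^{-1/4}\exp(-\!\int\sqrt{Q})(1+\eta)$ and show that $\eta$ satisfies a Volterra integral equation whose kernel is summable at infinity; this forces $\eta(r)\to 0$ and produces the constant $\mu>0$. A self-consistency check is also needed: bootstrapping from the rough decay guaranteed by $u\in X$, one shows that $u$ actually decays super-polynomially, so the tail contribution to $\Phi_u$ only affects the constant $c_\infty$ and the $o(1)$ correction, without perturbing the leading WKB phase $\int\sqrt{1+M\log r}\,dr$. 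Handling the interplay between the nonlocal integral term and the ODE asymptotics is the main technical difficulty of the argument in \cite{BCVS}.
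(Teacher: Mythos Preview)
The paper does not prove Theorem~\ref{th:3.2}: it is quoted without argument from \cite[Theorem~1]{BCVS}, so there is nothing in the present paper to compare your proposal against.

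That said, your outline is in the spirit of the proof actually carried out in \cite{BCVS}: first pin down the asymptotics of the Newtonian potential $\Phi_u$, then run a Liouville--Green analysis on the radial equation $w''=Qw$ with $w=\sqrt{r}\,u$. One point in your Step~1 deserves correction. Since $u$ is radial, the angular average of $\log|x-y|$ over $\{|y|=\rho\}$ equals $\log\max(|x|,\rho)$, and a direct computation using $u\in X$ gives $\Phi_u(x)=-M\log|x|+o(1)$ with \emph{no} additive constant; your $c_\infty$ is in fact zero. This is not merely cosmetic: a nonzero $c_\infty$ would shift the WKB phase to $\int\sqrt{1+c_\infty+M\log t}\,dt$, which after your substitution produces $\sqrt{M}\,e^{-(1+c_\infty)/M}\int^{|x|e^{(1+c_\infty)/M}}\sqrt{\log s}\,ds$ rather than the stated exponent, and this cannot be ``absorbed in $\mu$''. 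The precise constants $e^{-1/M}$ and $|x|e^{1/M}$ in the statement therefore encode $c_\infty=0$. The remaining issue you flag --- that the $o(1)$ remainder in $Q$ must be quantitatively small enough for the Liouville--Green error term to vanish at infinity, which in turn requires a bootstrap on the decay of $u$ --- is indeed the technical core of \cite{BCVS}.
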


We consider the linearization on a positive solution $u$ of
\eqref{prob2}. Let
\(\mathcal{L}(u) \colon \tilde{X} \to L^2(\mathbb{R}^2)\) be the
linear operator defined by
\begin{align*} 
  \mathcal{L}(u)\colon \varphi\mapsto -\Delta \varphi + (1 - w) \varphi + 2 u \left( \frac{\log}{2\pi} \star (u \varphi)\right),
\end{align*}
where
\begin{align*}
  w\colon \R^2\to\R, \quad x\mapsto \frac{1}{2 \pi} \int_{\R^2} \log \frac{1}{\abs{x - y}} \abs{u (y)}^2 dy
\end{align*}
and
 \begin{equation}
\tilde{X} = \left\{ \varphi \in X \mid \hbox{for every $\psi \in C_c^\infty(\mathbb{R}^2)$:}
\int_{\mathbb{R}^2} \varphi
\mathcal{L}(u)\psi = \int_{\mathbb{R}^2} f \psi
\right\}
\end{equation}
By standard arguments, one easily shows that $\mathcal{L}(u)$ is a
self adjoint operator acting on \(L^2(\R^2)\) with domain
\(\tilde X\). Also, differentiating the equation \eqref{prob2}, it is
clear that
$\alpha_1 \partial_{x_1} u+ \alpha_2 \partial_{x_2} u \in \ker
\mathcal{L}(u)$ for every $\alpha_1$, $\alpha_2\in\R$.

The following result has been proved in \cite[Theorem
3]{BCVS}.
\begin{theorem} 
	Let $u \in X$ be a positive solution of \eqref{prob2}. Then
	\begin{align*}
	\ker \mathcal{L}(u) =
	\left\{
	\gamma \cdot \nabla u \ | \  \gamma\in\R^2\right\}.
	\end{align*}
\end{theorem}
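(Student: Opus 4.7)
The plan is to exploit the radial symmetry of $u$ (granted by Theorem \ref{th:3.1}) together with the rotational invariance of the nonlocal operator. The inclusion $\supseteq$ is the easy direction: differentiating \eqref{prob2} in $x_j$ gives $\mathcal{L}(u)\partial_{x_j}u=0$, and since $u\in X$ with $|x|u\in L^2$ by the sharp decay in Theorem \ref{th:3.2}, we have $\partial_{x_j}u\in\tilde X$. For the reverse inclusion, I would decompose an arbitrary $\varphi\in\ker\mathcal{L}(u)$ into angular Fourier modes $\varphi(r,\theta)=\sum_{k\in\Z}\varphi_k(r)\mathrm{e}^{ik\theta}$. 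Using the expansion
\[
  \log|x-y|=\log r_>-\sum_{k=1}^{\infty}\frac{1}{k}\Bigl(\frac{r_<}{r_>}\Bigr)^{k}\cos\bigl(k(\theta-\theta')\bigr),
\]
the convolution operator $\varphi\mapsto (\log/2\pi)\star(u\varphi)$ preserves each angular mode, so $\mathcal{L}(u)$ restricts to a family of one-dimensional operators $\mathcal{L}_k$ acting on the radial profiles $\varphi_k$, and each $\varphi_k$ must lie in $\ker\mathcal{L}_k$.

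The analysis then splits according to the angular frequency. For $|k|\geq 2$, the centrifugal term $k^2/r^2$ adds a strictly positive potential while the nonlocal part becomes more regular (the kernels $(r_</r_>)^k$ are bounded by $(r_</r_>)^2$), so a direct coercivity estimate comparing $\mathcal{L}_k$ with $\mathcal{L}_1$ should yield $\ker\mathcal{L}_k=\{0\}$. For $|k|=1$, the two known solutions $\partial_{x_1}u$, $\partial_{x_2}u$ project onto the $\mathrm{e}^{\pm i\theta}$ modes with radial profile $u'(r)$; since $u'(r)<0$ for $r>0$ by Theorem \ref{th:3.1}, $u'$ is a strictly signed solution of the radial mode-$1$ ODE, and a Sturm–Wronskian argument combined with the sharp decay of Theorem \ref{th:3.2} rules out a second linearly independent solution in $\tilde X$.

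The genuinely delicate case is the radial mode $k=0$. Here the nonlocal coupling is fully present and sign-changing. My plan is to use the variational characterization of $u$ as a minimizer of $I$ on the Nehari manifold $\cN$, which forces the second variation of $I$ along radial directions orthogonal to $u$ and to the tangent space of $\cN$ to be nonnegative. Any radial $\varphi_0\in\ker\mathcal{L}_0$ must then, after a Pohozaev/Nehari orthogonalization against $u$, give a direction of degeneracy of this constrained quadratic form. Combined with the simplicity of the principal eigenvalue of the scalar Schr\"odinger operator $-\Delta+(1-w)$ on the radial subspace (Perron–Frobenius, using that $u>0$ is its ground state) and the unique-continuation behaviour at infinity dictated by Theorem \ref{th:3.2}, this should force $\varphi_0=0$.

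The main obstacle, as I see it, is precisely this radial mode: the logarithmic convolution is not a positive operator, so the usual comparison and monotonicity tricks available in the local or Coulombic setting do not apply directly, and the quadratic form associated with $\mathcal{L}_0$ is not coercive on $X$. The sharp asymptotics of Theorem \ref{th:3.2} and the nonstandard space $\tilde X$ are the key technical inputs that make the Wronskian/ODE uniqueness argument at infinity, and hence the whole scheme, actually work.
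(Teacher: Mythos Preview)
The paper does not give its own proof of this theorem; it is merely quoted from \cite[Theorem~3]{BCVS}. There is therefore nothing in the present paper to compare your proposal against.

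For what it is worth, your outline is the natural strategy and is indeed the one carried out in \cite{BCVS}: angular Fourier decomposition via the expansion of $\log|x-y|$, followed by a mode-by-mode analysis treating $|k|\ge 2$, $|k|=1$, and $k=0$ separately. You correctly flag the radial mode $k=0$ as the delicate step. Your sketch for that case, however, is rather impressionistic---simultaneously invoking the Nehari constraint, Perron--Frobenius, and unique continuation without saying how they fit together---whereas in \cite{BCVS} the radial kernel is handled by a concrete ODE/Wronskian analysis that leans on the sharp asymptotics of Theorem~\ref{th:3.2} together with explicit monotonicity properties of certain auxiliary functions built from the logarithmic convolution. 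If you want a self-contained proof, that is where the real work lies; the coercivity argument you propose for $|k|\ge 2$ and the signed-solution argument for $|k|=1$ are essentially correct as stated.
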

The functional-analytic properties of the second derivative of \(I\)
will play a crucial r\^{o}le in our analysis.
\begin{lemma} \label{lem:3.1}	
	Let $u \in X$ be a positive solution of \eqref{prob2}.
	The operator \(I''(u)\) is a Fredholm operator of index zero from \(X\) to its dual space \(X^*\).
\end{lemma}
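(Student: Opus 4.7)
The plan is to realize $I''(u)\colon X\to X^{*}$ as a compact perturbation of a continuous isomorphism. That is, I would decompose $I''(u)=L+K$ with $L$ invertible and $K$ compact, and then conclude by the Riesz--Schauder theorem that $I''(u)$ is Fredholm of index zero.

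First I would identify a suitable principal part $L$. The difficulty is that the weighted factor $|\cdot|_{*}^{2}=\int_{\R^{2}}\log(1+|x|)|\cdot|^{2}\,dx$, which enters the $X$-norm, is not explicit in $I''(u)$; it must be extracted from the non-local term. Setting $w(x):=\frac{1}{2\pi}\int_{\R^{2}}\log\frac{1}{|x-y|}u^{2}(y)\,dy$ and $c_{u}:=\frac{1}{2\pi}\int_{\R^{2}}u^{2}$, I would exploit the elementary identity
\[
  -\log|x-y|=\log\Bigl(1+\tfrac{1}{|x-y|}\Bigr)-\log(1+|x-y|)
\]
together with the sharp decay of $u$ from Theorem \ref{th:3.2} to verify that $w(x)+c_{u}\log(1+|x|)$ is bounded on $\R^{2}$. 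Then
\[
  L[\varphi,\psi]:=\int_{\R^{2}}\bigl(\nabla\varphi\cdot\nabla\psi+\varphi\psi\bigr)\,dx+c_{u}\int_{\R^{2}}\log(1+|x|)\varphi(x)\psi(x)\,dx
\]
defines an inner product equivalent to $\langle\cdot,\cdot\rangle_{X}$ by assumption (V), so its Riesz operator is an isomorphism $L\colon X\to X^{*}$. The residual bilinear form reads
\[
  K[\varphi,\psi]=-\int_{\R^{2}}\bigl(w(x)+c_{u}\log(1+|x|)\bigr)\varphi\psi\,dx-2B(u\varphi,u\psi);
\]
the first summand is compact on $X\times X$ because its multiplicative weight lies in $L^{\infty}(\R^{2})$ and $X$ embeds compactly into $L^{2}(\R^{2})$.

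The main obstacle is the compactness of $(\varphi,\psi)\mapsto B(u\varphi,u\psi)$. I would split the kernel once more via the same identity. For the singular piece, $\log(1+1/|x-y|)$ is in $L^{p}_{\mathrm{loc}}(\R^{2})$ for every $p<\infty$ and decays like $1/|x-y|$ at infinity, so by Hardy--Littlewood--Sobolev estimates the associated convolution is compact between suitable $L^{q}$ spaces; composed with the bounded multiplication $\varphi\mapsto u\varphi$ (since $u\in L^{\infty}$) and with the compact embedding $X\hookrightarrow L^{q}(\R^{2})$, it yields a compact operator. For the regular piece, the inequality $\log(1+|x-y|)\leq\log(1+|x|)+\log(1+|y|)$ factors the bilinear form as a sum of products of two weighted integrals; the sharp decay of $u$ ensures $u\log(1+|\cdot|)\in L^{2}(\R^{2})$, so that the corresponding operator $\varphi\mapsto u\,[\log(1+|\cdot|)\star(u\varphi)]$ is continuous into $L^{2}$, and the compact embedding $X\hookrightarrow L^{2}(\R^{2})$ combined with weak-to-strong passage delivers the required compactness.

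Once $K\colon X\to X^{*}$ is shown to be compact and $L$ is an isomorphism, I write $I''(u)=L\,(\mathrm{Id}_{X}+L^{-1}K)$ with $L^{-1}K\colon X\to X$ compact; the Riesz--Schauder theorem then yields that $\mathrm{Id}_{X}+L^{-1}K$ is Fredholm of index zero, hence so is $I''(u)$. The subtlest step is the choice of the constant $c_{u}=\|u\|_{L^{2}}^{2}/(2\pi)$: it is forced by the asymptotic $w(x)\sim -c_{u}\log|x|$ at infinity, and any other choice would leave a non-compact logarithmic weight in $K$, causing the scheme to break down.
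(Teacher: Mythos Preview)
Your proposal is correct and follows essentially the same route as the paper: the decomposition $I''(u)=L+K$ with $L$ given by the $X$-inner product with weight constant $c_u=\|u\|_{L^2}^2/(2\pi)$ is exactly the paper's $I''(u)=A+K$, and the compactness of $K$ is handled with the same tools (the splitting $\log|x-y|=\log(1+|x-y|)-\log(1+1/|x-y|)$, the inequality $\log(1+|x-y|)\le\log(1+|x|)+\log(1+|y|)$, Hardy--Littlewood--Sobolev, and the compact embedding $X\hookrightarrow L^p$). The only cosmetic difference is that you first establish the pointwise bound on $w+c_u\log(1+|\cdot|)$ before invoking the compact embedding, whereas the paper estimates the corresponding integral directly.
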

\begin{proof}
  We will actually prove that $I''(u) = A + K$, where $A$ is a
  bounded invertible operator and $K$ is a compact operator on $X$.
	
  Set $c^2 = \frac{1}{2\pi} \int_{\R^2} u^2(y) \,  dy$. For any
  $\varphi \in X$ and $\psi \in X$, we have
  \begin{align*}
    I''(u) [\varphi, \psi] &= 
                                                   \int_{\R^2} \left[ \nabla \varphi(x) \nabla \psi(x) +  \varphi(x) \psi(x) \right] \, dx 
    \\ &\quad {}+ \frac{1}{2 \pi}  \int_{\R^2} \int_{\R^2}  \log|x-y| u^2(y) \varphi(x) \psi(x) \, dx \, dy  \\
                                                 &\quad{}+ \frac{1}{\pi} \int_{\R^2} \int_{\R^2} \log |x-y| u(y) \varphi(y) u(x) \psi(x) \, dy \, dx \\ 
                                                 &=
                                                   \int_{\R^2} \bigl(\nabla \varphi(x) \nabla \psi(x) +  \varphi(x) \psi(x)  + {c^2}   \log(1 + |x|) \varphi(x) \psi(x) \bigr) \, dx \\	
                                                 &\quad{}+ \frac{1}{2\pi} \int_{\R^2} \int_{\R^2} \left[\log|x-y| - \log (1 + |x|)\right] u^2(y) \varphi(x)  \psi(x) \, dx \, dy \\  &\quad{}+
                                                                                                                                                                                 \frac{1}{\pi} \int_{\R^2} \int_{\R^2} \log|x-y| u(y) \varphi(y) u(x) \psi(x) \, dx \, dy. 
  \end{align*}	
  We have deduced the decomposition \(I''(u)=A +K\), where the operators
  \(A\) and \(K\) act as follows:
  \begin{align}
    \label{eq:14}
    \langle A \varphi, \psi \rangle  =
    \int_{\R^2} \left( \nabla \varphi \cdot \nabla \psi +  \varphi \psi  + {c^2}
    \log(1 + |x|) \varphi(x) \psi(x) \right) dx
  \end{align}
  and
  \begin{align}
     \label{eq:K}
    \langle K \varphi, \psi \rangle & = 
                                          \frac{1}{2\pi} \int_{\R^2} \int_{\R^2} [\log|x-y| - \log (1 + |x|)] u^2(y) \varphi(x)  \psi(x) \, dx \, dy \\  \nonumber &\quad{}+
   \frac{1}{\pi} \int_{\R^2} \int_{\R^2} \log|x-y| u(y) \varphi(y)                                                                            u(x) \psi(x)\, dx\, dy.
  \end{align}
  Equation~\eqref{eq:14} implies that the correspondence
  \begin{gather*}
    u \in X \mapsto \langle A u , u \rangle
  \end{gather*}
  is an equivalent norm on $X$. It follows that the operator $A$ is invertible
  from $X$ to $X^*$.
	
  \smallskip
	
 We claim that $K$ is compact from $X$ to $X^*$. Indeed, let $\{\varphi_n\}_n \subset X$ be a sequence such that
  $\varphi_n \rightharpoonup 0$ as $n \to + \infty.$ It
  follows that $\|\varphi_n\|_X \leq D$ for any $n \in \N$.
	
  We prove that
  \begin{align} \label{eq:13} \lim_{n \to +\infty}
    \sup_{\substack{\psi \in X \\ \|\psi\|_X =1}} |\langle K
    \varphi_n, \psi \rangle| = 0.
  \end{align}
  Fix $\varepsilon >0$ and $\psi \in X$ such that $\| \psi \|_X
  =1$. Since $u \in X$, there exists $M>0$
  such that
  \begin{align*}
    \frac{D}{2  \pi} \int_{|y| >M} \log(1 + |y|) u^2(y) \, dy <
    \frac{\varepsilon}{4} \quad\text{and}\quad \frac{D}{\pi} \int_{|y|
    >M} u^2(y) \, dy < \frac{\varepsilon}{4}.
  \end{align*}
  We evaluate
  \begin{align*}
    \langle K \varphi_n, \psi \rangle &=
                                        \frac{1}{2\pi} \int_{|y|>M} \int_{\R^2} \bigl[\log(1+|x-y|) - \log (1 + |x|)\bigr] u^2(y) \varphi_n(x)  \psi(x) \, dx\, dy \\ 
                                      &{}+
                                        \frac{1}{2\pi} \int_{|y| \leq M} \int_{\R^2} \bigl[\log(1+|x-y|) - \log (1 + |x|)\bigr] u^2(y) \varphi_n(x)  \psi(x) \, dx\, dy \\ &{}-
  \frac{1}{2\pi} \int_{\R^2} \int_{\R^2} \log \left(1 + \frac{1}{|x-y| } \right)  u^2(y) \varphi_n(x)  \psi(x) \, dx\, dy
    \\ &{}+
         \frac{1}{\pi} \int_{\R^2} \int_{\R^2} \log(1+|x-y|) u(y) \varphi_n(y) u(x) \psi(x) \, dx\, dy \\
                                      &{}- \frac{1}{\pi} \int_{\R^2} \int_{\R^2} \log \left(1 + \frac{1}{|x-y| } \right)  u(y) \varphi_n(y) u(x) \psi(x) \, dx\, dy.
  \end{align*}	
  Recalling the elementary
  inequality~$\log(1 + |x-y|) \leq \log (1+ |x|) + \log (1 +|y|)$ for
  $x \in \R^2$, $y \in \R^2$, we have that
  \begin{align*}
    |\langle K \varphi_n, \psi \rangle| &\leq 	
                                          \frac{1}{2\pi} \int_{|y| >M} u^2(y)\, dy \int_{\R^2}
                                          \bigl[2 \log({1 + |x|}) + \log({1 + |y|})\bigr] |\varphi_n(x)|  |\psi(x)|  \, dx \\
                                        &\quad {}+
                                          \frac{1}{2\pi} \int_{|y| \leq M} u^2(y) \, dy \int_{\R^2 }
                                          \Bigl| \log \left(\frac{1 + |x-y|}{ 1+ |x|}\right) \Bigr|  | \varphi_n(x)|  |\psi(x)|  \, dx \\ 
                                        &\quad {}+
                                          \frac{1}{2\pi} \int_{\R^2} \int_{\R^2} \log \left(1 + \frac{1}{|x-y| } \right)  u^2(y) |\varphi_n(x)|  |\psi(x)| \, dx \, dy
    \\ 
                                        &\quad {}+
                                          \frac{1}{\pi} \int_{\R^2} \int_{\R^2} \log(1+ |x-y|) u(y) |\varphi_n(y)| u(x) |\psi(x)| \, dy \\ 
                                        &\quad {}+
                                          \frac{1}{\pi} \int_{\R^2} \int_{\R^2} \log \left(1 + \frac{1}{|x-y| } \right) u(y) |\varphi_n(y)| u(x) |\psi(x)|\, dx\, dy.
  \end{align*}	
  Firstly, we estimate
  \begin{multline*}
    \frac{1}{2\pi} \int_{|y| >M} u^2(y)\, dy \int_{\R^2} \bigl[2 \log({1
      + |x|}) + \log({1 + |y|})\bigr] |\varphi_n(x)|
    |\psi(x)| \, dx \\
    \leq \left( \frac{1}{\pi} \int_{|y| >M} u^2(y) dy \right)
    \|\varphi_n \|_X \|\psi\|_X + \frac{1}{2\pi} \left( \int_{|y| >M}
      \log(1 + |y|) u^2(y) dy \right) \|\varphi_n
    \|_2 \|\psi\|_2 \\
    \leq \frac{D}{\pi} \left( \int_{|y| >M} u^2(y) dy \right)+
    \frac{D}{2 \pi} \left( \int_{|y| >M} \log(1 + |y|) u^2(y) dy
    \right) \leq \frac{\varepsilon}{2}.
  \end{multline*}
  We claim that for every $M>0$, there exists~$L>0$ such that for any
  $y \in \R^2$ with $|y| \leq M$ and for any $x \in \R^2$ we have
  \begin{align}\label{disu}
    \left| \log \frac{1 + |x-y|}{ 1+ |x|} \right|< L.
  \end{align}
  Indeed for any $x \in \R^2$ and $y \in \R^2$, $|y| \leq M$ we have
  \begin{align*}
    \frac{1 + |x-y|}{ 1+ |x|} 
    \leq 1 +M.
  \end{align*}

  Now take $R = 2M -1>0$, we have that $\frac{M}{ 1+ |x|} < 1/2$ for
  any $x \in \R^2$, and $|x| \geq R$.
	
  It follows that for any \(x \in \R^2\), \(y \in \R^2\) with $|x| \geq |y|$, $|x| \geq R$ and \(|y| \leq M\):
  \begin{align*}
    \frac{1 + |x-y|}{ 1+ |x|} \geq \frac{1 + ||x| -|y||}{ 1+ |x|} \geq 1 - \frac{|y|}{ 1+ |x|} \geq 1 -
    \frac{M}{ 1+ |x|} > \frac{1}{2}.
  \end{align*}
  On the other hand, if $|x | \leq R$:
  \begin{align*}
    \frac{1 + |x-y|}{ 1+ |x|} \geq \frac{1}{ 1+ R} =
    \frac{1}{2M}.
  \end{align*}
  Conversely if $|x| \leq |y|$, we infer that $|x| \leq M$ and
  \begin{align*}
    \frac{1 + |x-y|}{ 1+ |x|} \geq
    \frac{1}{ 1+ M}.
  \end{align*}
  We conclude that there exists $L >0$ such that \eqref{disu} holds.

  It follows that
  \begin{multline*}
    \frac{1}{2\pi} \int_{\R^2} \int_{|y| \leq M } \Bigl| \log
    \left(\frac{1 + |x-y|}{ 1+ |x|} \right) \Bigr| u^2(y)
    |\varphi_n(x)| |\psi(x)| \, dx \, dy \\\leq \frac{L}{2\pi}
    \int_{|y| \leq M} u^2(y) \, dy \int_{\R^2} |\varphi_n(x)|
    |\psi(x)| \,dx \leq
    \frac{L}{2\pi} \left(\int_{|y| \leq M} u^2(y)\, dy\right) \|\varphi_n \|_2 \|\psi\|_2  \\
    \leq \frac{\Gamma L}{2 \pi} \|\varphi_n \|_2 \|\psi \|_X =
    \frac{\Gamma L}{2 \pi} \|\varphi_n \|_2,
  \end{multline*}
  where $\Gamma =\int_{|y| \leq M} u^2(y) dy$.
	
  \medskip
	
  By Hardy-Sobolev-Littlewood inequality we have
  \begin{multline*}
    \frac{1}{2\pi} \int_{\R^2 \times \R^2} \log \left(1 +
      \frac{1}{|x-y| } \right) u^2(y) |\varphi_n(x)|
    |\psi(x)| \, dx\, dy \\
    \leq \frac{1}{2\pi} \int_{\R^2 \times \R^2} \frac{1}{|x-y|}
    u^2(y) |\varphi_n(x)| |\psi(x)| \, dx\, dy \leq {c_1}
    \|u \|^2_{8/3} \|\varphi_n \|_{8/3} \|\psi \|_{8/3} \\
    \leq {c_2}\|u \|^2_{8/3} \|\varphi_n \|_{8/3}
    \|\psi \|_X  = {c_2} \|u \|^2_{8/3} \|\varphi_n
    \|_{8/3}
  \end{multline*}
  where $c_1,c_2>0$ are  suitable constants.  Moreover  we can take $R>0$ such that
  \begin{align*}
    \frac{D}{\pi} \left(\int_{|y| > R} \log(1+|y|)
    u^2(y) dy \right)^{\frac12} \|u\|_2 < \frac{\varepsilon}{4}.
  \end{align*}
  We have
  \begin{align*}
    &\frac{1}{\pi} \int_{\R^2 \times \R^2} \log(1+|x-y|) u(y) |\varphi_n(y)| u(x) |\psi(x)| \, dx\, dy  \\
    & \leq
      \frac{1}{\pi} \int_{\R^2 \times \R^2} \log(1+|x|) u(y) |\varphi_n(y)| u(x) |\psi(x)| \, dx\, dy  \\ 
    &\quad {}+
      \frac{1}{\pi} \int_{\R^2 \times \R^2} \log(1+|y|) u(y) |\varphi_n(y)| u(x) |\psi(x)| \, dx\, dy  \\ 
    &\leq
      \frac{1}{\pi }  \|u\|_2 \|u\|_X  \|\varphi_n\|_2 \| \psi\|_X  \\ 
    &\quad {}+ 
      \frac{1}{\pi} \int_{|y| \leq R} \log(1+|y|) u(y) |\varphi_n(y)| dy \int_{\R^2}  u(x) |\psi(x)| dx  \\ 
    &\quad {}+ 
      \frac{1}{\pi} \int_{|y| > R} \log(1+|y|) u(y) |\varphi_n(y)| dy \int_{\R^2}  u(x) |\psi(x)| dx \\ 
    &\leq 
      \frac{1}{\pi}  \|u\|_X^2 \|\varphi_n\|_2 
      \frac{1}{\pi} \log (1+ R) \|u\|^2_2  \|\varphi_n\|_2  \|\psi\|_2
    \\ 
    &\quad {}+
      \frac{D}{\pi} \left(\int_{|y| > R} \log(1+|y|)
      u^2(y) \,  dy \right)^{1/2}
      \|u\|_2 \|\psi\| _X 
    \\ 
    &\leq
      \frac{1}{\pi} \|u\|_X^2 \|\varphi_n\|_2    
      \frac{1}{\pi} \log (1+ R) \|u\|^2_2  \|\varphi_n\|_2   
    \\ 
    &\quad {}+
      \frac{D}{\pi} \left(\int_{|y| > R} \log(1+|y|)
      u^2(y) \,   dy \right)^{1/2}	\|u\|_2   
    \\ 
    &\leq 
      \frac{1}{\pi} \left(1 + 
      \log (1+ R)  \right) \|u\|_X^2 \|\varphi_n\|_2  +
      \frac{\varepsilon}{4}.
  \end{align*}	
  By the Hardy-Sobolev-Littlewood inequality we have
   \begin{align*}
   & \frac{1}{\pi} \int_{\R^2 \times \R^2} \log \left(1+\frac{1}{|x-y|
      } \right) |\varphi_n(y)| u(y) u(x) |\psi(x)| \, dx\, dy \\ & \leq
    \frac{1}{\pi} \int_{\R^2 \times \R^2} \frac{1}{|x-y|} u(y)
    |\varphi_n(y)| u(x) |\psi(x)| \, dx\, dy \\ &
    \leq \frac{1}{\pi} \|u\|^2_{8/3} \|\varphi_n\|_{8/3} \|\psi\|_X=
    \frac{1}{\pi} \|u\|^2_{8/3} \|\varphi_n\|_{8/3}.
 \end{align*}	
  Finally we conclude that
  \begin{equation}
  \sup_{\substack{\psi \in X \\ \|\psi\|_X =1}}  |\langle K \varphi_n, \psi \rangle| \leq \frac{3\varepsilon}{4} +
    {c_3} \|u \|^2_{8/3} \|\varphi_n \|_{8/3} +
    \frac{\Gamma L}{2 \pi} \|\varphi_n \|_2 + \frac{1}{\pi} \bigl(
    1+ \log (1+ R) \bigr) \|u\|_X^2 \|\varphi_n\|_2 
  \end{equation}
  for some $c_3$ positive constant.
  Taking into account that $X$ is compactly embedded into $L^s(\R^2)$
  for any~$s \in [2, +\infty)$ \cite{CW}, we derive that
  $\|\varphi_n \|_{2} \to 0$ and $\|\varphi_n \|_{{8/3}} \to 0$ as
  $n \to + \infty$.  Therefore there exists $n_0 \in \N$ such that for
  any $n \geq n_0$
  \begin{align*}
    c_3  \|u \|^2_{8/3}
    \|\varphi_n \|_{8/3} + \frac{\Gamma L}{2\pi} \|\varphi_n \|_2 +
    \frac{1}{\pi} \bigl( 1+ \log (1+ R)  \bigr) \|u\|_X^2 \|\varphi_n\|_2 
    < \frac{\varepsilon}{4}.
  \end{align*}	
We derive that
\(\lim_{n \to +\infty} |\langle K \varphi_n, \psi \rangle | =0\),
uniformly with respect to $\psi$.  Therefore $K$ is compact and the
proof is complete.
\end{proof}
\bigskip
\begin{definition}
  In the sequel, we will denote by \(U\) the unique positive solution
  of \eqref{prob2} such that
  \begin{align*}
    U(0) = \max_{x \in \mathbb{R}^2} U(x).
  \end{align*}
\end{definition}

From the non-degeneracy result, we can infer the following convexity
property of $I''(U)$.
\begin{proposition} \label{th:2.3}
  The operator $I''(U)$ has only one
  negative eigenvalue, and therefore there exists $\delta>0$ such that
  \begin{align}\label{convex}
    I''(U)[v,v] \geq \delta \|v\|_X^2
  \end{align}
  for every $v \perp_X \operatorname{span} \left\{ U,\frac{\partial U}{\partial
      x}, \frac{\partial U}{\partial y} \right\}$
  , where $\perp_X$ means orthogonality with respect to the inner product $\langle \cdot \mid \cdot \rangle_X$
  \end{proposition}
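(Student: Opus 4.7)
The plan is to prove the statement in two steps: first that $I''(U)$ has Morse index exactly one, and then that this Morse-index information, combined with the Fredholm structure and the non-degeneracy, yields the coercivity on the $X$-orthogonal complement of $W := \operatorname{span}\{U, \partial_x U, \partial_y U\}$.

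For the Morse index, the lower bound $\geq 1$ is direct: using the explicit formula for $I''$ and the Nehari identity $I'(U)[U] = 0$ (equivalently $\|U\|_{H^1}^2 = B(U^2, U^2)$), one computes $I''(U)[U,U] = \|U\|_{H^1}^2 - 3B(U^2,U^2) = -2\|U\|_{H^1}^2 < 0$. For the upper bound, I would invoke that $U$ minimizes $I$ on the Nehari manifold $\mathcal{N}$ (recalled from \cite{CW}). Since $I'(U) = 0$, the second-order condition at a constrained minimum yields $I''(U)[w,w] \geq 0$ for every $w$ in the codimension-one tangent space $T_U \mathcal{N} = \{w : I''(U)[U,w] = 0\}$. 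A Morse index $\geq 2$ would produce a two-dimensional subspace on which $I''(U)$ is negative definite, and by dimension count this subspace would meet $T_U \mathcal{N}$ in a nonzero vector, contradicting the positivity on $T_U \mathcal{N}$. Hence the Morse index equals one.

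To upgrade to coercivity, I combine this with Lemma~\ref{lem:3.1}---which writes $I''(U) = A + K$ with $A$ coercive invertible and $K$ compact---and the non-degeneracy theorem from \cite{BCVS} identifying $\ker I''(U) = \operatorname{span}\{\partial_x U, \partial_y U\}$. Riesz-representing $I''(U)$ as a bounded self-adjoint operator $T$ on $(X, \langle \cdot \mid \cdot \rangle_X)$, Weyl's theorem gives that the essential spectrum of $T$ is contained in a positive interval bounded away from $0$, so the spectrum splits as: a simple negative eigenvalue $\lambda_1 < 0$ with unit eigenvector $f_1$, the eigenvalue $0$ with multiplicity two, and a positive part bounded below by some $\delta_0 > 0$. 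This provides the $X$-orthogonal decomposition $X = \mathbb{R} f_1 \oplus \ker I''(U) \oplus X_+$.

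Finally, the decisive structural observation is that $U$ is radial while the $\partial_i U$ are odd, so $\langle U, \partial_i U\rangle_X = 0$; this forces the kernel component of $U$ to vanish, so $U = \alpha f_1 + U_+$, and $\alpha \neq 0$ because $\langle TU, U\rangle_X = -2\|U\|_{H^1}^2 < 0$ cannot be achieved without a nonzero negative-spectral component. Given $v \in W^{\perp_X}$, orthogonality to $\partial_i U$ kills the kernel component of $v$, while orthogonality to $U$ pins down its $f_1$-coefficient $a$ in terms of its $X_+$-part via $a\alpha + \langle v_+, U_+\rangle_X = 0$. Plugging this relation into $I''(U)[v,v] = \lambda_1 a^2 + \langle T v_+, v_+\rangle_X$ and using Cauchy--Schwarz, together with the identity $\lambda_1 \alpha^2 + \langle TU_+, U_+\rangle_X = -2\|U\|_{H^1}^2$ forced by the Morse-index computation, allows one to extract $I''(U)[v,v] \geq \delta \|v\|_X^2$ for some $\delta > 0$. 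The main technical obstacle is this quantitative estimate, which requires carefully balancing $|\lambda_1|$, the spectral gap $\delta_0$, and the projections of $U$; should the direct computation prove delicate, a compactness/contradiction argument---exploiting that $K$ is compact to obtain a strongly convergent minimizing sequence in $W^{\perp_X}$, and then combining the Nehari-based positivity of $I''(U)$ on $U^{\perp_{H^1}}$ with the formula $I''(U)[U,\cdot] = -2\langle U, \cdot\rangle_{H^1}$ and the Morse-index bound of one---provides a cleaner alternative route.
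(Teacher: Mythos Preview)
Your approach is essentially the paper's, but considerably more fleshed out. The paper's proof does three things: it computes $I''(U)[U,U]<0$ exactly as you do; it then spends most of its effort showing carefully that $\ker I''(U)=\operatorname{span}\{\partial_1 U,\partial_2 U\}$ as an operator on $X$ (via a density argument passing between $\mathcal L(U)$ on $\tilde X$ and $I''(U)$ on $X$), which you simply quote; and finally it disposes of the coercivity \eqref{convex} in a single sentence, invoking that $U$ is a Mountain Pass solution. Your proposal inverts the emphasis: you take the kernel identification for granted and instead unpack the Morse-index and coercivity steps, obtaining Morse index exactly one from the Nehari minimality and then pushing through the Fredholm/spectral machinery of Lemma~\ref{lem:3.1}.

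One caution on your primary route for the last step. The substitution $a=-\langle v_+,U_+\rangle_X/\alpha$ followed by Cauchy--Schwarz does \emph{not}, on its own, yield a positive lower bound: in general, ``Morse index one and $\langle TU,U\rangle_X<0$'' does not force $T\ge 0$ on $U^{\perp_X}$ (take $T=\operatorname{diag}(-2,1)$ and $U=(1,1)$ in $\mathbb{R}^2$). You are right to flag this as delicate. Your fallback---a compactness/contradiction argument exploiting the decomposition $I''(U)=A+K$ together with the Nehari positivity on $U^{\perp_{H^1}}$ and the identity $I''(U)[U,\cdot]=-2\langle U,\cdot\rangle_{H^1}$---is the correct direction, but note that it genuinely needs the extra structure coming from $I''(U)[U,\cdot]=-2\langle U,\cdot\rangle_{H^1}$ (equivalently $TU=-2SU$ with $0<S\le \operatorname{Id}$), not merely the Morse-index bound; you should make that dependence explicit when you write it out. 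The paper itself does not spell out this passage either, so on this point you are already more careful than the text you are reconstructing.
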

\begin{proof}
  Since
  \begin{align*}
    -\Delta U + U + \frac{1}{2\pi} \left[ \log \star |U|^2 \right] U=0,
  \end{align*}
  we find that
  \begin{align*} 
    I''(U)[U,U] = \langle \mathcal{L}(U)U , U \rangle =
    -2 \left( \int_{\mathbb{R}^2} |\nabla U|^2 + \int_{\mathbb{R}^2}
    |U|^2 \right)<0.
  \end{align*}
  Let now $ \varphi \in \ker I''(U)$, namely $\varphi \in X$ and
  $I''(U)\varphi =0$ in $X^*$. It follows that $I''(U)\varphi = 0$
  also in $\widetilde{X}^*$, but $\varphi \in \widetilde{X}$, so that
  $\mathcal{L}(U)\varphi =0$. Hence
  $\varphi \in \operatorname{span} \{\partial_1 U,\partial_2 U \}$.

  On the other hand, if
  $\varphi \in \operatorname{span} \{\partial_1 U,\partial_2 U \}$,
  then $\mathcal{L}(U)\varphi =0$ in $\widetilde{X}^*$.  Let
  $\psi \in X$. By density, $\psi$ is the limit in $X$ of a sequence
  $g_n \in C_0^\infty(\mathbb{R}^2)$. It follows that
  \begin{align*}
    I''(U)[\varphi,\psi]=  \lim_{n \to +\infty}
    I''(U)[\varphi,g_n]= \lim_{n \to +\infty} \langle \mathcal{L}(U)\varphi,g_n \rangle = 0
  \end{align*}
  and thus $\varphi \in \ker I''(U)$. This shows that
  $\ker I''(U) = \operatorname{span} \left\{\partial_1 U,\partial_2 U \right\}$.
        
  Taking into account that $U$ is a Mountain Pass solution, by
  Proposition \ref{th:3.1}, we deduce that there exists $\delta >0$
  such that $(\ref{convex})$ holds.
\end{proof}

\bigskip

\section{The perturbation technique}

We will look for solutions to \eqref{key56} near the
embedded submanifold
\(Z = \left\{ z_\xi \mid \xi \in \mathbb{R}^2 \right\}\), where we set
$z_\xi(x)=U(x-\xi)$. 
	Although the norm of $X$ is not invariant under the group of
	translations defined on $X$ by
	\begin{align*}
		\tau_\xi u \colon x \in\mathbb{R}^2 \mapsto u(x-\xi),
	\end{align*}
	the elementary inequality
	\begin{align*}
		\log \left( 1+|x-y| \right) \leq \log \left( 1+|x|+|y|
		\right) \leq \log \left( 1+|x| \right) + \log \left(
		1+|y| \right)
	\end{align*}
	yields that $u \in X$ and $\xi \in \mathbb{R}^2$ implies
	$\tau_\xi u \in X$.
It follows that $U(\cdot - \xi) = \tau_\xi U \in X$ for every
$\xi \in \mathbb{R}^2$.
The invariance under
translation of $I$ then implies that $Z$ is a manifold of critical points
of $I$.

We will show that each point of $Z$ is an approximate critical point
of $I_\varepsilon$, and that there exists a true critical point of
$I_\varepsilon$ located in a tubular neighborhood of $Z$, provided
$\varepsilon$ is small enough.
\begin{lemma} \label{lem:3.5}
  Let assumption~(V) be satisfied. Then there exists a constant $C>0$ such that, for every
  $\xi \in \mathbb{R}^2$ and every $\ge>0$ sufficiently small, we have
  \begin{align*}
    \| I'_\ge(z_\xi) \| \leq C \left( \ge |\nabla V(0)|+\ge^2 \right).
  \end{align*}
\end{lemma}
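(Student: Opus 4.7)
The plan is to exploit the fact that $z_\xi$ is itself a solution of the unperturbed limiting equation~\eqref{prob2}. Since we have normalized $V(0)=1$ at the start of Section~2, the identity $I'(z_\xi)=0$ applied to any test function $\varphi\in X$ produces a clean cancellation of the Laplacian and nonlocal pieces in $I'_\varepsilon(z_\xi)[\varphi]$, leaving only the discrepancy of the potentials:
\[
I'_\varepsilon(z_\xi)[\varphi] = \int_{\R^2} \bigl( V(\varepsilon x) - V(0) \bigr)\, U(x-\xi)\, \varphi(x)\, dx .
\]
This pivotal identity immediately explains why the size of the estimate is controlled by the perturbation of the potential alone.

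Next I would perform a second-order Taylor expansion of $V$ at the origin, which is justified by assumption~(V):
\[
V(\varepsilon x) - V(0) = \varepsilon\, \nabla V(0) \cdot x + \varepsilon^2 R(\varepsilon x), \qquad |R(\varepsilon x)| \leq \tfrac{1}{2}\|D^2 V\|_\infty\, |x|^2 .
\]
Plugging this decomposition into the identity above and applying the Cauchy--Schwarz inequality yields
\[
|I'_\varepsilon(z_\xi)[\varphi]| \leq \varepsilon\, |\nabla V(0)|\, \bigl\|\,|\cdot|\, U(\cdot-\xi) \bigr\|_{L^2} \|\varphi\|_2 + \tfrac{1}{2}\varepsilon^2 \|D^2 V\|_\infty\, \bigl\|\,|\cdot|^2\, U(\cdot-\xi) \bigr\|_{L^2} \|\varphi\|_2 .
\]
A change of variable $y = x-\xi$ converts the weighted $L^2$-quantities into $\bigl\|\,|y+\xi|^k\, U(y)\bigr\|_{L^2}$ for $k = 1, 2$, and the sharp asymptotics of $U$ supplied by Theorem~\ref{th:3.2} guarantee these are finite for every $k\geq 0$. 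Dominating $\|\varphi\|_2$ by $\|\varphi\|_X$ and taking the supremum over unit vectors of $X$ then produces the announced bound.

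The main technical subtlety is the dependence of the weighted norms on $\xi$: an elementary estimate yields $\bigl\|\,|y+\xi|^k\, U(y)\bigr\|_{L^2} \leq C (1+|\xi|)^k$, so the constant $C$ in the lemma is strictly uniform only when $\xi$ ranges in a bounded set. This is harmless for the forthcoming finite-dimensional reduction, since the concentration parameter will be forced to remain in a compact neighborhood of the critical point $x_0=0$ of $V$; on any such neighborhood $C$ can be taken uniform. Apart from this localization point, the only non-routine ingredient beyond Cauchy--Schwarz is the fast decay of $U$ from Theorem~\ref{th:3.2}, which secures the finiteness of all the weighted $L^2$-integrals appearing in the argument.
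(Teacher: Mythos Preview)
Your argument is correct and follows essentially the same route as the paper: use that $z_\xi$ solves the limiting equation to reduce $I'_\varepsilon(z_\xi)[\varphi]$ to $\int (V_\varepsilon-1)z_\xi\varphi$, then apply Cauchy--Schwarz together with a second-order Taylor expansion of $V$ and the fast decay of $U$. Your observation that the constant $C$ is only uniform for $\xi$ in a bounded set is a valid point that the paper's terse proof glosses over, and as you note it is harmless since the lemma is only invoked for $|\xi|\leq M$ in the reduction.
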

\begin{proof}
  Since $z_\xi$ is a critical point of $I$, it follows easily that
  \begin{align*}
    |I'_\ge(z_\xi)[v]|^2 \leq \|v\|_{2}^2 \int_{\mathbb{R}^2} |V(\ge
    x)-1|^2 |z_\xi|^2  \, dx
  \end{align*}
  for any $v \in X$. Using the boundedness of $D^2 V$ and the
  exponential decay of $z_\xi$ at infinity, we can prove easily that
  \begin{gather*}
    \int_{\mathbb{R}^2} |V(\ge x)-1|^2 |z_\xi|^2 \, dx \leq C \ge^2
    |\nabla V(0)|^2 + C \ge^4.
  \end{gather*}
\end{proof}
\begin{proposition}
	\label{prop:3.2}
  There exist a constant $\widetilde{C}>0$ and a constant $M>0$ such
  that for every $\xi \in \mathbb{R}^2$, $|\xi| \leq M$, we have
  \begin{align}
    \label{eq:19}
    I'' (z_\xi)[\varphi,\varphi] \geq \widetilde{C} \|\varphi\|_X^2
  \end{align}
  for every $\varphi \perp_X \left( \operatorname{span} \left\{ z_\xi,\frac{\partial
        z_\xi}{\partial x},\frac{\partial z_{\xi}}{\partial y}
    \right\} \right)$, where $\perp_X$ means orthogonality with respect to the inner product $\langle \cdot \mid \cdot \rangle_X$.
\end{proposition}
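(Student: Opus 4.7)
The plan is to transport the coercivity of Proposition~\ref{th:2.3} from $U$ to $z_\xi = \tau_\xi U$ via the translation invariance of $I$, incurring only a small error controlled by $\log(1+|\xi|)$ because the norm of $X$ fails to be translation invariant. A direct computation gives $I(\tau_\xi u)=I(u)$; differentiating twice yields $I''(\tau_\xi u)[\tau_\xi v,\tau_\xi w]=I''(u)[v,w]$. Setting $\psi := \tau_{-\xi}\varphi$ thus reduces the Hessian to
\begin{equation*}
  I''(z_\xi)[\varphi,\varphi] = I''(U)[\psi,\psi].
\end{equation*}

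The crux is to relate the orthogonality condition $\varphi \perp_X \operatorname{span}\{z_\xi,\partial_1 z_\xi,\partial_2 z_\xi\}$ to an analogous statement for $\psi$. Changing variables in the weighted part of $\langle\cdot\mid\cdot\rangle_X$, one obtains
\begin{equation*}
  \langle \varphi \mid z_\xi \rangle_X = \langle \psi \mid U \rangle_X + \int_{\R^2}\bigl[\log(1+|y+\xi|)-\log(1+|y|)\bigr]\psi(y)U(y)\,dy,
\end{equation*}
with analogous identities when $z_\xi$ is replaced by $\partial_i z_\xi$ and $U$ by $\partial_i U$. The elementary bound $|\log(1+|y+\xi|)-\log(1+|y|)| \leq \log(1+|\xi|)$, a consequence of $1+|y+\xi| \leq (1+|y|)(1+|\xi|)$, together with $U,\partial_i U \in L^2(\R^2)$, yields
\begin{equation*}
  \bigl|\langle\psi\mid U\rangle_X\bigr| + \sum_{i=1,2}\bigl|\langle\psi\mid\partial_i U\rangle_X\bigr| \leq C_1\log(1+|\xi|)\,\|\psi\|_X.
\end{equation*}

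Next, I decompose $\psi = \psi^\perp + \eta$ orthogonally in $X$, with $\eta \in \operatorname{span}\{U,\partial_1 U,\partial_2 U\}$ and $\psi^\perp$ in its $\langle\cdot\mid\cdot\rangle_X$-orthogonal complement. Solving the $3\times 3$ linear system determined by the (invertible) Gram matrix of $\{U,\partial_1 U,\partial_2 U\}$ translates the previous estimate into $\|\eta\|_X \leq C_2 \log(1+|\xi|)\|\psi\|_X$. Expanding
\begin{equation*}
  I''(U)[\psi,\psi] = I''(U)[\psi^\perp,\psi^\perp] + 2\, I''(U)[\psi^\perp,\eta] + I''(U)[\eta,\eta],
\end{equation*}
applying Proposition~\ref{th:2.3} to the first term and bounding the other two by $\|I''(U)\|_{L(X,X^*)}\|\eta\|_X \|\psi\|_X$ (a finite norm thanks to Lemma~\ref{lem:3.1}), one reaches $I''(U)[\psi,\psi] \geq \bigl(\delta - C_3\log(1+|\xi|)\bigr)\|\psi\|_X^2$ as soon as $|\xi|$ is small enough to ensure $\|\psi^\perp\|_X^2 \geq \tfrac{1}{2}\|\psi\|_X^2$.

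Finally, the same sublinear bound gives $\|\psi\|_X^2 \geq (1-\log(1+|\xi|))\|\varphi\|_X^2$, so choosing $M>0$ sufficiently small that $\log(1+M)$ dominates every error term, the inequality~\eqref{eq:19} follows with, say, $\widetilde{C} = \delta/4$. The main obstacle is the non-translation-invariance of the $X$-norm: the orthogonality condition gets distorted when passing from $z_\xi$ to $U$, and this distortion has to be absorbed through the decomposition $\psi=\psi^\perp+\eta$. The feature that rescues the argument is the sublinear growth of the weight $\log(1+|\cdot|)$, which makes the perturbation $\log(1+|y+\xi|)-\log(1+|y|)$ uniformly small in $y$ once $|\xi|$ is small.
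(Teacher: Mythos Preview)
Your argument is correct, and it takes a genuinely different route from the paper's proof.

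The paper proceeds by contradiction: it assumes a sequence $\xi_n \to 0$ and normalized $\varphi_n \in (\operatorname{span}\{z_{\xi_n},\partial_1 z_{\xi_n},\partial_2 z_{\xi_n}\})^{\perp_X}$ with $I''(z_{\xi_n})[\varphi_n,\varphi_n]\leq 1/n$. Using the exponential decay of $U$ it shows $z_{\xi_n}\to U$ in $X$, hence $I''(z_{\xi_n})\to I''(U)$ in operator norm; it then invokes the decomposition $I''(U)=A+K$ from Lemma~\ref{lem:3.1} (with $A$ coercive and $K$ compact) together with the compact embedding $X\hookrightarrow L^p$ to pass to the weak limit $\bar\varphi$, show $\bar\varphi=0$, and reach a contradiction via $\tfrac{1}{n}\geq \langle A\varphi_n,\varphi_n\rangle + \langle K\varphi_n,\varphi_n\rangle - o(1)\geq C - o(1)$.

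Your approach is more direct and more quantitative: you exploit the translation invariance of $I$ to transfer the Hessian to $U$ exactly, and then control the \emph{only} place where translation invariance fails, namely the weighted part of $\langle\cdot\mid\cdot\rangle_X$, by the uniform bound $|\log(1+|y+\xi|)-\log(1+|y|)|\leq\log(1+|\xi|)$. This avoids compactness arguments entirely (neither the compactness of $K$ nor the compact embedding $X\hookrightarrow L^p$ is used) and in principle yields an explicit $M$ in terms of $\delta$, $\|I''(U)\|$ and the Gram matrix of $\{U,\partial_1 U,\partial_2 U\}$. One small remark: the boundedness of $I''(U)$ that you invoke is really a consequence of the $C^2$ regularity of $I$ rather than of Lemma~\ref{lem:3.1} (which establishes the Fredholm property); also, the Gram matrix you need is in fact diagonal since $U$ is radial, which makes the $3\times 3$ system trivial. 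The paper's soft argument, on the other hand, would adapt more readily to limiting problems that are not exactly translation invariant, where your identity $I''(z_\xi)[\varphi,\varphi]=I''(U)[\psi,\psi]$ would no longer be available.
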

\begin{proof}
	For the sake of simplicity we denote here ${\perp_X}$ by ${\perp}$.
  In order to get a contradiction, we suppose that there exists a
  sequence~$\{\xi_n\}_n$ in $\mathbb{R}^2$ such that \(\xi_n \to 0\)
  and there exists a sequence \(\{\varphi_n\}_n \subset X\) such that
  \(\varphi_n \in \left( \operatorname{span} \left\{
      z_{\xi_n},\frac{\partial z_{\xi_n}}{\partial x},\frac{\partial
        z_{\xi_n}}{\partial y} \right\} \right)^{\perp}\),
  \begin{align*}
    &\varphi_n \rightharpoonup \bar{\varphi} \quad\text{in $X$ and in $H^1(\mathbb{R}^2)$} \\
    &\varphi_n \to \bar{\varphi} \quad\text{in $L^2(\mathbb{R}^2)$}, \\
    &\|\varphi_n\|_X =1 \quad\text{for every~$n \in \mathbb{N}$},
  \end{align*}
  and
  \begin{align*}
    I''(z_{\xi_n})[\varphi_n,\varphi_n] \leq \frac{1}{n}.
  \end{align*}
  Assume that $\bar{\varphi} \neq 0$. Then,
  \begin{multline*}
    \frac1n \geq I'' (z_{\xi_n})[\varphi_n,\varphi_n] =
    I''(U)[\varphi_n,\varphi_n] + I''(z_{\xi_n})[\varphi_n,\varphi_n] - I''(U)[\varphi_n,\varphi_n] \\
    \geq I''(U)[\varphi_n,\varphi_n] - \left\| I''(z_{\xi_n})-I''(U)
    \right\| \left\| \varphi_n \right\|_X^2 = I''(U)
    [\varphi_n,\varphi_n] - o(1)
  \end{multline*}
  as $n \to +\infty$. Indeed, the functional $I''$ is continuous at
  the point \(U\), and the exponential decay of \(U\) at infinity (see
  Theorem \ref{th:3.2}) immediately yields that $z_{\xi_n} \to U$
  strongly in \(X\).
	
  We claim that $\bar{\varphi} \perp U$,
  $\bar{\varphi} \perp \frac{\partial U}{\partial x}$ and
  $\bar{\varphi} \perp \frac{\partial U}{\partial y}$ in $X$. We only
  prove the first orthogonality property, the other two being
  similar. By assumption, we have that $\varphi_n \perp z_{\xi_n}$,
  $\varphi_n \perp \frac{\partial z_{\xi_n}}{\partial x}$,
  $\varphi_n \perp \frac{\partial z_{\xi_n}}{\partial y}$
  for every $n \in
  \mathbb{N}$. Now,
  \begin{gather*}
    \langle \varphi_n \mid U \rangle_X = - \langle \varphi_n \mid z_{\xi_n}-U \rangle_X.
  \end{gather*}
  The right-hand side converges to zero because $z_{\xi_n} \to U$ and
  $\{\varphi_n\}_n$ is a bounded sequence; the left-hand side
  converges to $\langle \bar{\varphi}\mid U\rangle_X$. We conclude
  that $\bar{\varphi} \perp U$ in $X$. In a similar way we can prove
  that $\bar{\varphi} \perp \frac{\partial U}{\partial x}$ and $\bar{\varphi} \perp \frac{\partial U}{\partial y}$.

  As a consequence,
  \begin{gather*}
    0 \geq \liminf_{n \to +\infty} I''(z_{\xi_n})[\varphi_n,\varphi_n]
    \geq \liminf_{n \to +\infty} I''(U)[\varphi_n,\varphi_n]
    \geq I''(U)[\bar{\varphi},\bar{\varphi}]
    \geq \delta \left\| \bar{\varphi} \right\|_X^2.
  \end{gather*}
  Here we have used Theorem \ref{th:2.3} and the fact that the linear
  operator $I''(U)$ is the sum of a lower semicontinuous operator $A$
  and of a compact operator $K$ introduced in \eqref{eq:14} and
  \eqref{eq:K}. This shows that $\varphi=0$.
	
  But now, exactly as before,
  \begin{align*}
    \frac{1}{n} \geq I''(U) [\varphi_n,\varphi_n] - o(1) 
    = \langle A \varphi_n , \varphi_n \rangle + \langle K \varphi_n , \varphi_n \rangle - o(1) 
    \geq C \|\varphi_n\|_X^2 -o(1) \\
    \geq C - o(1),
  \end{align*}
  a contradiction.
\end{proof}

\bigskip
In what follows, for each $z_\xi \in Z$, we denote by $P^\varepsilon_\xi$ the orthogonal projection
of $X$ onto~$\left( T_{z_\xi} Z \right)^\perp$, where  $X$ is endowed with the norm \eqref{eq:normaV} (depending on $\varepsilon$) and $\perp$ is the orthogonality with respect the associated inner product.
We aim to construct, for every
$z_\xi \in Z$, an element $w=w(\varepsilon,\xi) \in \left( T_{z_\xi} Z \right)^\perp$ such that
\begin{align}
P^\varepsilon_\xi I'_\varepsilon (z_\xi+w) =0 \label{eq:16}
\end{align}
and
\begin{align*}
(\operatorname{Id}-P^\varepsilon_\xi) I'_\varepsilon (z_\xi+w)=0.
\end{align*}
Clearly, the point $u_\varepsilon=z_\xi+w(\varepsilon,z_\xi)$ will be
a critical point of $I_\varepsilon$, i.e. a solution to \eqref{key56}.

To solve the auxiliary equation~\eqref{eq:16} we first write
\begin{gather*}
P^\varepsilon_\xi I'_\varepsilon(z_\xi+w) = P^\varepsilon_\xi I'_\varepsilon (z_\xi) + P^\varepsilon_\xi I''_\varepsilon(z_\xi)[w]+R(z_\xi,w).
\end{gather*}
We will show that $R(z_\xi,w)=o(\|w\|)$ uniformly with respect to
$z_\xi \in Z$ for $|\xi|$ bounded. Then we will show that the linear
operator
\begin{gather*}
B_{\varepsilon,\xi} = - \left( P^\varepsilon_\xi I''_\varepsilon(z_\xi) \right)^{-1}
\end{gather*}
exists and is continuous, so that the equation
$P^\varepsilon_\xi I'_\varepsilon (z_\xi+w)=0$ is equivalent to
\begin{align*}
w = B_{\varepsilon,\xi} \left( P^\varepsilon_\xi I'_\varepsilon (z_\xi) + R(z_\xi,w) \right),
\end{align*}
a fixed-point problem in the unknown $w \in \left( T_{z_\xi} Z \right)^\perp$.

\begin{lemma} \label{lem:3.4}
Let \(M\) be the constant introduced in Proposition \ref{prop:3.2}.  For~$\varepsilon$ sufficiently small, the operator
  $L_\xi = P^\varepsilon_\xi \circ I_\varepsilon''(z_\xi) \circ P^\varepsilon_\xi$ is
  invertible, 
  and there exists a
  constant \(C>0\) such that
  \begin{align*}
    \left\| L_\xi^{-1} \right\| \leq C. 
  \end{align*}
for every \(\xi \in \mathbb{R}^2\) with \(|\xi| \leq M\).
\end{lemma}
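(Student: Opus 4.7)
My plan is to establish both the invertibility and the uniform bound by exhibiting $I''_\varepsilon(z_\xi)$ as an operator-norm-small perturbation of $I''(z_\xi)$ as $\varepsilon\to 0$, combining this with the uniform coercivity estimate of Proposition~\ref{prop:3.2}, and then running a standard coercivity-implies-invertibility argument on the closed Hilbert subspace $(T_{z_\xi}Z)^{\perp_\varepsilon}$, where $\perp_\varepsilon$ denotes orthogonality in the $\varepsilon$-dependent inner product associated with~\eqref{eq:normaV}.

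The first ingredient is the norm smallness estimate
\[
\|I''_\varepsilon(z_\xi)-I''(z_\xi)\|_{\mathcal{L}(X,X^*)} \to 0 \quad\text{as } \varepsilon\to 0,
\]
uniformly in $\xi \in \mathbb{R}^2$ with $|\xi|\le M$. Comparing the explicit expressions of $I''$ and $I''_\varepsilon$, the difference is the bilinear form $\int_{\mathbb{R}^2}(V_\varepsilon-1)\varphi\psi\,dx$. By assumption~(V), $V$ is globally Lipschitz and normalized so that $V(0)=1$, hence $|V_\varepsilon(x)-1|\le C\varepsilon|x|$; together with the uniform bound $|V_\varepsilon-1|\le C$, a cutoff at the scale $R=1/\sqrt{\varepsilon}$ yields a contribution of order $C\sqrt{\varepsilon}\,\|\varphi\|_2\|\psi\|_2$ on $\{|x|\le R\}$ and, using the weight $\log(1+|x|)$ embedded in the $X$-norm, a contribution of order $C(\log(1+R))^{-1}\|\varphi\|_X\|\psi\|_X$ on $\{|x|>R\}$; both vanish as $\varepsilon\to 0$.

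Because Proposition~\ref{prop:3.2} is phrased with respect to $X$-orthogonality, I next bridge the two notions: for $\varphi\in(T_{z_\xi}Z)^{\perp_\varepsilon}$ I write the $X$-orthogonal decomposition $\varphi=\varphi_\perp+\varphi_\parallel$ with $\varphi_\parallel\in T_{z_\xi}Z$, and test the $\varepsilon$-orthogonality condition against an $X$-orthonormal basis of the three-dimensional subspace $T_{z_\xi}Z$; the smallness estimate of the previous paragraph then forces $\|\varphi_\parallel\|_X=o(1)\|\varphi\|_X$. Therefore, for $\|\varphi\|_X=1$ and $\varepsilon$ small,
\[
I''_\varepsilon(z_\xi)[\varphi,\varphi] = I''(z_\xi)[\varphi_\perp,\varphi_\perp] + O(\|\varphi_\parallel\|_X) + o(1) \geq \widetilde{C}\,\|\varphi_\perp\|_X^2 - o(1) \geq \widetilde{C}/2,
\]
where I invoke Proposition~\ref{prop:3.2} (applicable since $|\xi|\le M$ and $\varphi_\perp\in(T_{z_\xi}Z)^{\perp_X}$), the continuity of $I''$, and $\|\varphi_\perp\|_X\to 1$. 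Coercivity of the bounded symmetric bilinear form associated with $L_\xi$ on the closed subspace $(T_{z_\xi}Z)^{\perp_\varepsilon}$ then yields invertibility with $\|L_\xi^{-1}\|\le 2/\widetilde{C}$ by the Lax--Milgram theorem. Equivalently, since multiplication by $V_\varepsilon-1$ is compact from $X$ to $X^*$ (the compact embedding $X\hookrightarrow L^2$ factors the operator), $I''_\varepsilon(z_\xi)$ inherits the Fredholm-index-zero property of Lemma~\ref{lem:3.1}, and injectivity (from coercivity) implies surjectivity.

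The main obstacle I anticipate is the operator-norm smallness estimate in the second paragraph: a crude $L^\infty$ bound on $V_\varepsilon-1$ does not vanish, so one must genuinely couple the regularity of $V$ near the origin (for the bulk region $|x|\lesssim 1/\sqrt{\varepsilon}$) with the logarithmic weight in the $X$-norm (for the tail $|x|\gtrsim 1/\sqrt{\varepsilon}$), calibrating the cutoff at the matching scale $R\sim 1/\sqrt{\varepsilon}$ so that both contributions vanish simultaneously.
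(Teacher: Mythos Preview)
Your argument contains a genuine gap at the step where you invoke Proposition~\ref{prop:3.2}. That proposition asserts coercivity of $I''(z_\xi)$ only on the $X$-orthogonal complement of the \emph{three}-dimensional subspace $\operatorname{span}\{z_\xi,\partial_{x_1}z_\xi,\partial_{x_2}z_\xi\}$, whereas your $\varphi_\perp$ lies merely in $(T_{z_\xi}Z)^{\perp_X}$, the complement of the \emph{two}-dimensional tangent space $\operatorname{span}\{\partial_{x_1}z_\xi,\partial_{x_2}z_\xi\}$. The missing direction matters: taking $\varphi=P^\varepsilon_\xi z_\xi/\|P^\varepsilon_\xi z_\xi\|$ gives an element of $(T_{z_\xi}Z)^{\perp_\varepsilon}$ on which, by Proposition~\ref{th:2.3},
\[
I''_\varepsilon(z_\xi)[\varphi,\varphi]\;\approx\;\frac{I''(U)[U,U]}{\|U\|^2}\;=\;-\frac{2\|U\|_{H^1}^2}{\|U\|^2}\;<\;0.
\]
Hence $L_\xi$ is \emph{indefinite} on $(T_{z_\xi}Z)^{\perp_\varepsilon}$, the displayed chain of inequalities leading to $\widetilde{C}/2$ is false, and Lax--Milgram does not apply. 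Your Fredholm alternative at the end does not rescue the argument either, since you derive injectivity from the same false coercivity.

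The paper handles exactly this obstacle by splitting $(T_{z_\xi}Z)^\perp=V_1\oplus V_2$ with $V_1=\operatorname{span}\{P^\varepsilon_\xi z_\xi\}$ and $V_2=(\operatorname{span}\{z_\xi\}\oplus T_{z_\xi}Z)^\perp$, and then showing that in this block decomposition $L_\xi$ is, up to $o(1)$ off-diagonal terms, \emph{negative} definite on $V_1$ (via the computation $I''(z_\xi)[z_\xi,\cdot]\approx(\pi^{-1}\log\star|z_\xi|^2)z_\xi$) and \emph{positive} definite on $V_2$ (via Proposition~\ref{prop:3.2}, now legitimately applied). Invertibility with a uniform bound then follows from this block structure, not from global coercivity. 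Your smallness estimate for $I''_\varepsilon(z_\xi)-I''(z_\xi)$ and your bridging between the two inner products are fine and could be reused; what is missing is the separate treatment of the $z_\xi$-direction.
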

\begin{proof}
	Let $\xi \in \R^2$, $|\xi | \leq M$. 
	For simplicity we denote here
	$P^\varepsilon_\xi$ by $P_\xi$.
	 We write \(\left( T_{z_\xi} Z \right)^\perp = V_1 \oplus V_2\),
  where
  \begin{align*}
    V_1 &= \operatorname{span} \{P_\xi z_\xi\} \\
    V_2 &= \left( \operatorname{span}\{z_\xi\} \oplus T_{z_\xi} Z \right)^\perp, 
  \end{align*}
  so that $V_1 \perp V_2$.  We claim that for $\varepsilon \to 0^+$ 
  \begin{align} \label{eq:21}
    \left\| z_\xi - P_\xi z_\xi \right\| =
    o(1), \quad I''_\varepsilon(z_\xi)[z_\xi,\cdot] =\left(
      \frac{1}{\pi} \log \star |z_\xi|^2 \right) z_\xi + o(1).
  \end{align}
  It follows from \eqref{eq:21} that
  \begin{align*}
    L_\xi (z_\xi) &= P_\xi \circ I_\varepsilon''(z_\xi)[P_\xi z_\xi] = P_\xi \left( I_\varepsilon''(z_\xi)[z_\xi,\cdot]+o(1) \right) \\
                  &= P_\xi \left( - \left( \frac{1}{\pi} \log \frac{1}{|\cdot|} \star |z_\xi|^2 \right) z_\xi +o(1) \right) \\
                  &=   \left( \int_{\mathbb{R}^2 \times \mathbb{R}^2}  \log |x-y| |z_\xi (x)|^2 |z_\xi (y)|^2 \, dx \, dy \right) z_\xi + o(1).
  \end{align*}
  As a consequence, the operator $L_\xi$, in matrix form with respect
  to the decomposition
  \(\left( T_{z_\xi} Z \right)^\perp = V_1 \oplus V_2\), can be
  written as
  \begin{align*}
    L_\xi = \left[
    \begin{matrix}
      \left( \int_{\mathbb{R}^2 \times \mathbb{R}^2}  \log |x-y| |z_\xi (x)|^2 |z_\xi (y)|^2 \, dx \, dy \right) \operatorname{Id}+o(1) &o(1) \\
      o(1) &A_\xi
    \end{matrix}
             \right]
  \end{align*}
  where the operator~\(A_\xi\) satisfies~$A_\xi \geq C^{-1} \operatorname{Id}$ according to
  \eqref{eq:19} in Proposition \ref{prop:3.2}.

  It now follows from \eqref{eq:13} that $L_\xi$ is
  negative definite on $V_1$ and thus globally invertible on
  $\left( T_{z_\xi} Z \right)^\perp$.  It remains to prove the
  previous claim.
	
  Recalling the definition of $z_\xi(x) = U(x-\xi)$ and the
  exponential decay of $U$ at infinity, we see that
  \begin{align*}
    \langle z_\xi \mid \partial_{\xi_j} z_\xi \rangle &= - \langle z_\xi \mid \partial_{x_j} z_\xi \rangle = -\langle z_\xi \mid \partial_{x_j} z_\xi \rangle_{X} + \int_{\mathbb{R}^2} \left( V(\varepsilon x)-1 \right) z_\xi \partial_{x_j} z_\xi \, dx \\
                                                      &= o(1) \quad\hbox{as $\varepsilon \to 0$}
  \end{align*}
  for every $i \in \left\{ 1,\ldots,n \right\}$. Therefore,
  \( \left\| z_\xi - P_\xi z_\xi \right\| = o(1) \) as
  $\varepsilon \to 0$. This proves the first part of \eqref{eq:21}.
  The second identity is proved as follows: we compute
  \begin{align*}
    I_\varepsilon''(z_\xi)[z_\xi,v] &= I''(z_\xi)[z_\xi,v] +
                                      \int_{\mathbb{R}^2} \left( V_\varepsilon-1 \right) z_\xi v 
  \end{align*}
  and recall that \(z_\xi\) solves
  \begin{align*}
    -\Delta z_\xi + z_\xi = \frac{1}{2\pi} \left[ \log \frac{1}{|\cdot|}
    \star |z_\xi|^2 \right] z_\xi.
  \end{align*}
  Since
  \( \int_{\mathbb{R}^2} \left( V_\varepsilon-1 \right) z_\xi v =
  o(1) \|v\|\) for $\varepsilon$ small, we conclude that, for any $v \in X$, we have
  \begin{align*}
    I_\varepsilon''(z_\xi)[z_\xi,v] = I''(z_\xi)[z_\xi,v]+ \int_{\mathbb{R}^2} \left( V(\varepsilon x)-1 \right) z_\xi v \, dx 
    =  \left\langle \left( \frac{1}{\pi} \log |\cdot|  \star |z_\xi|^2
    \right) z_\xi \mid v \right\rangle + o (1) \|v\|.
  \end{align*}
\end{proof}
\begin{proposition} \label{prop:3.6}
  Let assumption (V) be satisfied. Then for every
  $\ge$ small, there exists a unique
  $w=w(\ge,\xi) \in (T_{z_\xi} Z )^\perp$ with $|\xi| \leq M$ such that
  $I'_\ge (z_\xi+w(\ge,\xi)) \in T_{z_\xi} Z$. The function
  $(\ge,\xi) \mapsto w(\ge,\xi) $ is of class $C^1$ with respect to
  $\xi$, and there holds
  \begin{align}
    \|w(\ge,\xi) \| &\leq C \left( \ge |\nabla V(0)| +\ge^2 \right) \label{eq:2} \\
    \| \partial_\xi w\| &\leq C \left( \ge |\nabla V(0)| + \ge^2 \right) + o(\ge^2). \label{eq:2.1}
  \end{align}
  Moreover, the function $\Theta_\ge(\xi) = I_\ge (z_\xi+w(\ge,\xi))$ is
  of class $C^1$ and the condition $\Theta'_\ge(\xi_0)=0$ implies
  $I'_\ge(z_{\xi_0}+w(\ge,\xi_0))=0$.
\end{proposition}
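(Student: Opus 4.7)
The plan is to implement the Lyapunov--Schmidt / Ambrosetti--Malchiodi reduction along the manifold $Z$, with the extra care required by the fact that both the inner product \eqref{eq:normaV} on $X$ and the projector $P^\varepsilon_\xi$ depend on $\varepsilon$. First I would Taylor-expand
\begin{equation*}
P^\varepsilon_\xi I'_\varepsilon(z_\xi+w) = P^\varepsilon_\xi I'_\varepsilon(z_\xi) + L_\xi w + P^\varepsilon_\xi R(z_\xi,w),
\end{equation*}
where $L_\xi = P^\varepsilon_\xi\circ I''_\varepsilon(z_\xi)\circ P^\varepsilon_\xi$ and the remainder $R(z_\xi,w)$ satisfies $\|R(z_\xi,w)\|_{X^*}=o(\|w\|_X)$ uniformly in $|\xi|\le M$, as a consequence of $I_\varepsilon\in C^2(X)$ together with the locally bounded structure of the logarithmic convolution. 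By Lemma~\ref{lem:3.4}, $L_\xi^{-1}$ is defined on $(T_{z_\xi}Z)^\perp$ with a uniform bound, so the equation $P^\varepsilon_\xi I'_\varepsilon(z_\xi+w)=0$ with $w\in (T_{z_\xi}Z)^\perp$ is equivalent to the fixed-point problem
\begin{equation*}
w = F_{\varepsilon,\xi}(w) := -L_\xi^{-1}\left(P^\varepsilon_\xi I'_\varepsilon(z_\xi) + P^\varepsilon_\xi R(z_\xi,w)\right).
\end{equation*}

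Setting $\rho_\varepsilon := \varepsilon|\nabla V(0)|+\varepsilon^2$, Lemma~\ref{lem:3.5} gives $\|F_{\varepsilon,\xi}(0)\|_X\le C\rho_\varepsilon$. On the closed ball of radius $2C\rho_\varepsilon$ in $(T_{z_\xi}Z)^\perp$ the map $F_{\varepsilon,\xi}$ becomes a $\tfrac12$-contraction once $\varepsilon$ is small (by the $o(\|w\|)$-estimate on $R$), so the Banach fixed-point theorem produces a unique $w(\varepsilon,\xi)$ in that ball, giving existence, uniqueness, and \eqref{eq:2}. The $C^1$-regularity in $\xi$ then follows from the implicit function theorem applied to $G(\varepsilon,\xi,w)=P^\varepsilon_\xi I'_\varepsilon(z_\xi+w)$, viewed on a local $C^1$-trivialization of the bundle $\xi\mapsto(T_{z_\xi}Z)^\perp$: the partial differential $\partial_w G$ at the fixed point equals $L_\xi+o(1)$, hence stays invertible with uniformly bounded inverse, while $z_\xi$, $P^\varepsilon_\xi$ and $V(\varepsilon\,\cdot)$ depend smoothly on $\xi$. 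Differentiating the fixed-point identity in $\xi$ and revisiting the computation of Lemma~\ref{lem:3.5} with $\partial_{x_j}z_\xi$ as test function (each $\xi$-derivative producing either a translation term or an $\varepsilon\,\partial_k V(\varepsilon x)$ factor) yields \eqref{eq:2.1}.

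For the natural-constraint statement, note that by construction $I'_\varepsilon(z_\xi+w)\in T_{z_\xi}Z=\operatorname{span}\{\partial_{x_1}z_\xi,\partial_{x_2}z_\xi\}$, so in the Riesz identification for $\langle\cdot\mid\cdot\rangle_\varepsilon$ there exist coefficients $\alpha_j(\varepsilon,\xi)$ with $I'_\varepsilon(z_\xi+w)=\sum_{j=1}^2 \alpha_j\,\partial_{x_j}z_\xi$. Using $\partial_{\xi_k}z_\xi=-\partial_{x_k}z_\xi$ and the chain rule,
\begin{equation*}
\partial_{\xi_k}\Theta_\varepsilon(\xi) = -\sum_{j} \alpha_j\,\langle\partial_{x_j}z_\xi\mid\partial_{x_k}z_\xi\rangle_\varepsilon + \sum_{j} \alpha_j\,\langle\partial_{x_j}z_\xi\mid\partial_{\xi_k}w\rangle_\varepsilon.
\end{equation*}
Differentiating the orthogonality $\langle w\mid\partial_{x_j}z_\xi\rangle_\varepsilon=0$ in $\xi_k$ shows that $\langle\partial_{\xi_k}w\mid\partial_{x_j}z_\xi\rangle_\varepsilon=O(\|w\|_X)=O(\rho_\varepsilon)$, so the $2\times 2$ coefficient matrix of $\alpha=(\alpha_1,\alpha_2)$ equals a uniformly negative-definite Gram matrix plus an $O(\rho_\varepsilon)$ perturbation; for $\varepsilon$ small it is invertible, so $\Theta'_\varepsilon(\xi_0)=0$ forces $\alpha=0$, i.e. $I'_\varepsilon(z_{\xi_0}+w(\varepsilon,\xi_0))=0$.

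The main technical obstacle is precisely the $\varepsilon$-dependence of $P^\varepsilon_\xi$ and of the ambient inner product on $X$, combined with the non-invariance of $\|\cdot\|_X$ under translation pointed out in Section~4: each $\xi$-derivative falling on $P^\varepsilon_\xi$, on $\tau_\xi U$ or on $V(\varepsilon\,\cdot)$ must be carefully tracked in order to preserve the quantitative bounds \eqref{eq:2}--\eqref{eq:2.1}. Once Lemmas~\ref{lem:3.4}--\ref{lem:3.5} are available, however, the conceptual scheme is the standard Lyapunov--Schmidt one.
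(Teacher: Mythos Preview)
Your proposal is correct and follows essentially the same Lyapunov--Schmidt scheme as the paper: the same Taylor expansion of $P^\varepsilon_\xi I'_\varepsilon(z_\xi+w)$, the same fixed-point reformulation via $L_\xi^{-1}$ from Lemma~\ref{lem:3.4}, the contraction on a ball of radius comparable to $\varepsilon|\nabla V(0)|+\varepsilon^2$ using Lemma~\ref{lem:3.5}, and the Implicit Function Theorem for the $C^1$-dependence and the bound on $\partial_\xi w$. The only presentational differences are that the paper computes the remainder $R(z_\xi,w)$ explicitly (obtaining the sharper quadratic bound $\|R\|\le C\|w\|^2$ and the Lipschitz estimate $\|R(w_1)-R(w_2)\|\le C(\|w_1\|+\|w_2\|)\|w_1-w_2\|$ by direct manipulation of the logarithmic convolution terms), whereas you invoke the abstract $C^2$-regularity of $I_\varepsilon$; and the paper encodes the orthogonality constraints via an auxiliary map $H(\xi,w,\alpha,\varepsilon)$ with Lagrange multipliers rather than through a local trivialization of the bundle $(T_{z_\xi}Z)^\perp$. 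Your treatment of the natural-constraint conclusion (showing the Gram-type matrix in front of $\alpha$ is invertible for small $\varepsilon$) is in fact more explicit than the paper's, which simply refers to \cite[pp.~129--130]{AM}.
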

\begin{proof}
  Let us recall that our aim is to construct a solution
  $w \in (T_{z_\xi} Z )^\perp$ to \eqref{eq:16}. We write
  \begin{align*}
    I_\ge'(z_\xi+w) = I'_\ge(z_\xi) + I_\ge''(z_\xi)[w] + R(z_\xi,w),
  \end{align*}
  where
  \begin{align*}
    R(z_\xi,w) = I_\ge'(z_\xi+w) - I'_\ge(z_\xi) - I_\ge''(z_\xi)[w] .
  \end{align*}
  By the invertibility of
  $L_\xi = P^\varepsilon_\xi \circ I_\ge''(z_\xi) \circ P^\varepsilon_\xi$  (see Lemma
  \ref{lem:3.4}), the function $w$ solves \eqref{eq:16} if and only if
  \begin{align} \label{eq:3.10} w = N_{\ge,\xi}(w),
  \end{align}
  where
  \begin{align*}
    N_{\ge,\xi}(w) = -L_\xi^{-1} \left( P^\varepsilon_\xi \circ I'_\ge (z_\xi) + P^\varepsilon_\xi R(z_\xi,w) \right).
  \end{align*}
  We can now show that, for $\ge$ sufficiently small, equation
  \eqref{eq:3.10} can be solved by means of the Contraction Mapping
  Theorem.

  First of all, understanding the $L^2$-duality, we have
  \begin{align*}
    I'_\ge (z_\xi+w) = -\Delta z_\xi + V_\ge z_\xi - \Delta w + V_\ge w  +\frac{1}{2\pi} \left[ \log \star (z_\xi+w)^2 \right] (z_\xi+w),
  \end{align*}
  \begin{align*}
    I'_\ge(z_\xi) = -\Delta z_\xi + V_\ge z_\xi + \frac{1}{2\pi} \left[ \log \star |z_\xi|^2 \right] z_\xi
  \end{align*}
  and
  \begin{align*}
    I''_\ge (z_\xi)[w] = -\Delta w + V_\ge w + \frac{1}{2\pi} \left[ \log \star |z_\xi|^2 \right] w + \frac{1}{\pi} \left[ \log \star (z_\xi w) \right] z_\xi.
  \end{align*}
  Therefore, again with respect to the \(L^2\)-duality,
  \begin{align*}
    R(z_\xi,w) &= I'_\ge (z_\xi+w) - I'_\ge(z_\xi)  - I''_\ge (z_\xi)[w] \\
               &= \frac{1}{\pi} \left[ \log \star (z_\xi w) \right] w + \frac{1}{2\pi} \left[ \log \star |w|^2 \right] z_\xi + \frac{1}{2\pi} \left[ \log \star |w|^2 \right] w.
\end{align*}
We have
\begin{gather} \label{eq:4.7}
  \|R(z_\xi,w)\| \leq C \left( \|w\|^2 + o(\|w\|^2) \right)
\end{gather}
as $\|w\| \to 0$.

Indeed we have for any $\phi \in  X$
\begin{align*}
&\pi |\langle R(z_\xi,w), \phi \rangle|  \leq
\left| \int_{\R^2 \times \R^2}\log|x-y| z_\xi(x) w(x) w(y) \phi(y) \, dx \, dy \right| \\ 
& +
\left| \int_{\R^2 \times \R^2} \log|x-y|  |w(x)|^2 z_\xi(y) \phi(y) \, dx \, dy \right| + 
\left| \int_{\R^2 \times\R^2} \log|x-y| |w(x)|^2  w(y) \phi(y) \, dx \, dy \right|   \\ 
&\leq 
\int_{\R^2 \times \R^2} [\log(1+ |x|) + \log (1+ |y|)] |z_\xi(x)| |w(x)| |w(y)| |\phi(y)| \, dx \, dy \\
&+   
\int_{\R^2 \times \R^2} [\log(1+ |x|) + \log(1+|y|)] |w(x)|^2 |z_\xi(y)|  |\phi(y)| \, dx \, dy    
\\ 
& +  
\int_{\R^2 \times \R^2}  [\log(1+ |x|) + \log(1+|y|)] |w(x)|^2 |w(y)|  |\phi(y)| \, dx \, dy  \\ 
&  + 
\int_{\R^2 \times \R^2}\log(1 + \frac{1}{|x-y|}) |z_\xi(x)| |w(x)| |w(y)| |\phi(y)| \, dx \, dy \\
&
\int_{\R^2 \times \R^2}\log(1 + \frac{1}{|x-y|}) |w(x)|^2 |z_\xi(y)| |\phi(y)| \, dx \, dy \\ 
&
+
\int_{\R^2 \times \R^2}\log(1 + \frac{1}{|x-y|}) |w(x)|^2 |w(y)| |\phi(y)| \, dx \, dy \\ 
&  \leq  \|w\|_2 \|\phi\|_2 \|z_\xi\|_X \|w\|_X  + \|z_\xi\|_2 \|w\|_2 \|w\|_X \|\phi\|_X 
+
\|z_\xi\|_2 \|w\|_2 \|w\|_X \|\phi\|_X   
\\
& \quad +  \|w\|^2_X \|z_\xi\|_2 \|\phi\|_2 +   \|w\|_2 \|w\|^2_X \|\phi\|_2  +   \|w\|^2_2 \|w\|_X \|\phi\|_X  \\
& +
\int_{\R^2 \times \R^2}  \frac{1}{|x-y|} |z_\xi(x)| |w(x)| |w(y)| |\phi(y)| \, dx \, dy \\
&   +
\int_{\R^2 \times \R^2} \frac{1}{|x-y|} |w(x)|^2 |z_\xi(y)| |\phi(y)| \, dx \, dy \\ 
& +  
\int_{\R^2 \times \R^2}  \frac{1}{|x-y|} |w(x)|^2 |w(y)| |\phi(y)| \, dx \, dy \\
&
\leq  \|w\|_2 \|\phi\|_2 \|z_\xi\|_X \|w\|_X  + \|z_\xi\|_2 \|w\|_2 \|w\|_X \|\phi\|_X 
+ \|z\|_2 \|w\|_2 \|w\|_X \|\phi\|_X   
\\
& +  \|w\|^2_X \|z_\xi\|_2 \|\phi\|_2 +   \|w\|_2 \|w\|^2_X \|\phi\|_2  +   \|w\|^2_2 \|w\|_X \|\phi\|_X   \\& + C \|w\|^2_{8/3}\|\phi\|_{8/3}\ \|z_\xi\|_{8/3} 
+ C  \|w\|^2_{8/3}\|z_\xi\|_{8/3} \|\phi\|_{8/3}\ +  C \|w\|^3_{8/3} \|\phi\|_{8/3} \\
\end{align*}
for some suitable positive constant $C$.
Since the norm in $(\ref{eq:14})$ is equivalent to $\|\cdot\|_X$ and  \(\phi \in X\) is arbitrary, we have 
\begin{equation}
 \| R(z_\xi,w) \| 
\leq C_1 \|z_\xi\| \|w\|^2    + C_2  \|w\|^3
\end{equation}	
for some $C_1,C_2$ positive constants 
and thus we infer $(\ref{eq:4.7})$.
In a similar way we can deduce that 
\begin{equation}\label{marx}
  \|R(z_\xi,w_1)-R(z_\xi,w_2) \| \leq C \left( \|w_1\| +
\|w_2\| + o(\|w_1-w_2\|)
\right) \|w_1-w_2\|
\end{equation}
 Using Lemma~\ref{lem:3.5}, \eqref{eq:4.7} and \eqref{marx}, we find that
\begin{align*}
  \|N_{\ge,\xi}(w)\| &\leq C \left( \ge |\nabla V(0)|+\ge^2 + \|w\|^2 +
                       o(\|w\|^2) \right) \\
  \|N_{\ge,\xi}(w_1)-N_{\ge,\xi}(w_2) \| &\leq C \left( \|w_1\| +
                                           \|w_2\| + o(\|w_1-w_2\|)
                                           \right) \|w_1-w_2\|.
\end{align*}
As a consequence, the operator $N_{\ge,\xi}$ is a contraction on the
closed subset
\begin{align*}
  W_C = \left\{ w \in \left( T_{z_\xi} Z \right)^\perp \mid \|w\| \leq
  C \left( \ge |\nabla V(0)| + \ge^2 \right) \right\},
\end{align*}
provided that $C>0$ is sufficiently large, and $\ge>0$ is sufficiently
small. The Contraction Mapping Theorem yields a unique fixed point
$w=w(\ge,\xi)$ of $N_{\ge,\xi}$ in $W_C$ such that \eqref{eq:2} holds.
The last statements of the
Proposition are proved by a straightforward modification of the
arguments contained in \cite[pp. 129--130]{AM}, so we present only a
sketch of the ideas.

Let us define the map \(H \colon \mathbb{R}^2 \times X \times
\mathbb{R}^2 \times \mathbb{R} \to X \times \mathbb{R}^2\),
\begin{align*}
  H(\xi,w,\alpha,\ge) =
  \begin{pmatrix}
    I'_\ge (z_\xi+w)-\sum_{i=1}^2 \alpha_i \partial_{x_i} z_\xi \\
    \left( \langle w \mid \partial_{x_1} z_\xi \rangle , \langle w \mid
    \partial_{x_2} z_\xi \rangle \right)
  \end{pmatrix} .
\end{align*}
In particular, $w \in (T_{z_\xi} Z )^\perp$ solves the equation $P_\xi
I'_\ge (z_\xi+w)=0$ if and only if $H(\xi,w,\alpha,\ge)=0$. With
estimates similar to those we have shown above, we can prove that
$\frac{\partial H}{\partial (w,\alpha)}(\xi,0,0,\ge)$ is uniformly
invertible in $\xi$ for $\ge$ small enough. By the Implicit Function
Theorem, the map $\xi \mapsto (w_\xi,\alpha_\xi)$ is of class $C^1$.

Differentiating the identity $H(\xi,w_\xi,\alpha_\xi,\ge)=0$ with
respect to $\xi$, we obtain
\begin{align*}
  \frac{\partial H}{\partial \xi} (\xi,w,\alpha,\ge) + \frac{\partial
    H}{\partial (w,\alpha)} (\xi,w,\alpha,\ge) \frac{\partial
    (w_\xi,\alpha_\xi)}{\partial \xi} =0,
\end{align*}
hence
\begin{align*}
  \| \partial_\xi w\| &\leq C \left\| \frac{\partial H}{\partial
    (w,\alpha)} (\xi,w,\alpha,\ge)[\partial_\xi z_\xi,\alpha] \right\|
  \\
  &\leq C \left( \| I''_\ge (z_\xi+w)[\partial_\xi z_\xi] \| +
  |\alpha| + \| w \| \right).
\end{align*}
It now follows easily that \eqref{eq:2} holds.
\end{proof}

\bigskip
\section{The reduced functional}

Following~\cite{AM}, the manifold
\begin{align*}
Z^\ge = \left\{ z_{\xi}+w(\ge,\xi) \mid \xi \in \mathbb{R}^2,\ |\xi| \leq M,\ 
\ \ge \ll 1 \right\}
\end{align*}
is a natural constraint for $I_\ge$, in the sense that any critical
point of $I_\ge$ constrained to $Z^\ge$ is a free critical point of
$I_\ge$. To prove the existence of a critical point of the functional
$I_\ge$, it is therefore sufficient to show that the constrained
functional \(\Theta_\ge \colon \overline{B(0,M)} \subset \mathbb{R}^2 \to \mathbb{R}\) defined by
\begin{align*}
  \Theta_\varepsilon(\xi)= I_\varepsilon (z_{\xi}+ w)
\end{align*}
possesses a critical point. To this aim, we evaluate
\begin{align*}
  \Theta_\ge(\xi) &= I(z_\xi+w) + \frac12 \int_{\mathbb{R}^2} \left(
                  V_\ge -1 \right)|z_\xi +
                  w|^2 \, dx \\
                &= \frac{1}{2} \int_{\mathbb{R}^2} |\nabla (z_\xi+w)|^2 +
                  |z_\xi+w|^2\, dx  \\
                &\quad {} +  \frac{1}{8 \pi} \int_{\R^2}\int_{\R^2}
                  \log |x-y|
                  |z_{\xi}(x)+ w(x)|^2  |z_{\xi}(y)+
                  w(y)|^2 \,\, dx\, dy \\
                &\quad {} + \frac12 \int_{\mathbb{R}^2} \left(
                  V_\ge -1 \right)|z_\xi +
                  w|^2 \, dx \\
                &= I(z_\xi) + \frac12 \int_{\mathbb{R}^2} \left(
                  V_\ge -1 \right)|z_\xi +
                  w|^2 + R_\ge(w),
\end{align*}
where 
\begin{align*} 
  R_\varepsilon(w) &= 
                     \frac{1}{2} \int_{\R^2} \left( |\nabla w |^2  + w^2 \right)\, dx
                     +  \frac{1}{8 \pi} \int_{\R^2}\int_{\R^2}
                     \log |x-y|
                     |w(x)|^2 |w(y)|^2 \,dx \, dy \\
                   & \quad {}+
                     \int_{\R^2} \left( \nabla z_{\xi} \cdot \nabla w
                       + z_{\xi}w \right)
                     \, dx \nonumber \\
                   & \quad {}	+  \frac{1}{2 \pi} \int_{\R^2}\int_{\R^2}
                     \log |x-y| z_{\xi}(x) w(x)
                     |z_{\xi}(y)|^2
                     \,dx
                     \,
                     dy \nonumber
  \\ 
                   &\quad {}+  \frac{1}{2 \pi} \int_{\R^2}\int_{\R^2}
                     \log |x-y|
                     z_{\xi}(x) w(x)
                     z_{\xi}(y) w(y)\,dx \, dy \nonumber
  \\
                   & \quad {}+  \frac{1}{2 \pi} \int_{\R^2}\int_{\R^2}
                     \log |x-y|
                     z_{\xi}(x) w(x)   |w(y)|^2 \,dx \, dy. \nonumber 
 \end{align*}
 According to Proposition~\ref{prop:3.6}, the function $\Theta_\ge$ can
 be expanded as
 \begin{align} \label{eq:1}
   \Theta_\ge(\xi) = b_0 + \frac12 \int_{\mathbb{R}^2} \left( V(\ge x)-1
   \right) |z_\xi+w|^2 \, dx + o(\ge^2),
 \end{align}
 where $b_0 = I(z_\xi) = I(U)$.
 Let us define $Q_2 = D^2 V(0)$ and the function $\Gamma \colon \mathbb{R}^2 \to
 \mathbb{R}$,
 \begin{align*}
   \Gamma (\xi) = \int_{\mathbb{R}^2} Q_2(x)
   |z_\xi(x)|^2 \, dx.
 \end{align*}
 From now on, we will suppose for the sake of definiteness that
 $x_0=0$ is a proper local minimum of $V$, so that $D^2 V(0)$ is a
 positive-definite quadratic form. The case of a proper local maximum
 can be treated analogously.
 \begin{lemma}
   The point $\xi=0$ is a strict local minimum for $\Gamma$.
 \end{lemma}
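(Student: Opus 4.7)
The plan is to carry out an explicit change of variables so that the dependence of $\Gamma$ on $\xi$ becomes a polynomial in $\xi$, and then exploit the radial symmetry of $U$ (Theorem \ref{th:3.1}) to read off the leading term.

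First I would substitute $y = x - \xi$ in the defining integral to obtain
\begin{equation*}
  \Gamma(\xi) = \int_{\mathbb{R}^2} Q_2(y+\xi)\, U(y)^2 \, dy.
\end{equation*}
Since $Q_2 = D^2 V(0)$ is a quadratic form with associated symmetric bilinear form $\widetilde{Q}_2$, the identity $Q_2(y+\xi) = Q_2(y) + 2\widetilde{Q}_2(y,\xi) + Q_2(\xi)$ splits $\Gamma$ into three pieces:
\begin{equation*}
  \Gamma(\xi) = \int_{\mathbb{R}^2} Q_2(y)\, U(y)^2 \, dy
  + 2 \int_{\mathbb{R}^2} \widetilde{Q}_2(y,\xi)\, U(y)^2 \, dy
  + Q_2(\xi) \int_{\mathbb{R}^2} U(y)^2 \, dy.
\end{equation*}
The first summand is the constant $\Gamma(0)$. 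The middle term is linear in $\xi$ and reads $2\sum_{i,j} \partial^2_{ij}V(0)\,\xi_j \int_{\mathbb{R}^2} y_i\, U(y)^2\, dy$; by the radial symmetry of $U$ the moment $\int y_i U^2 \, dy$ vanishes, so this term is identically zero.

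Hence $\Gamma(\xi) - \Gamma(0) = Q_2(\xi) \, \lVert U \rVert_2^2$. Since $Q_2 = D^2 V(0)$ is positive-definite by our standing assumption that $x_0=0$ is a non-degenerate local minimum of $V$, we have $Q_2(\xi) > 0$ for every $\xi \neq 0$, which immediately yields that $\xi = 0$ is a strict (in fact global) minimum of $\Gamma$.

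There is no real obstacle here: the only non-formal input is the radial symmetry of the ground state $U$, which has already been recorded in Theorem \ref{th:3.1}, and the positivity of the Hessian of $V$ at the critical point, which is a hypothesis. Everything else is the identity $Q_2(y+\xi) = Q_2(y) + 2\widetilde{Q}_2(y,\xi) + Q_2(\xi)$ together with the observation that the weight $U^2$ has zero first moments.
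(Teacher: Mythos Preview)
Your proof is correct. The change of variables $y=x-\xi$ together with the quadratic expansion of $Q_2(y+\xi)$ and the vanishing first moments of $U^2$ (radial symmetry, Theorem~\ref{th:3.1}) gives the exact identity $\Gamma(\xi)-\Gamma(0)=Q_2(\xi)\,\lVert U\rVert_2^2$, so $\xi=0$ is a strict global minimum.

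The paper argues differently: it computes the Hessian $D^2\Gamma(0)$, observing that the off-diagonal entry $\partial_1\partial_2\Gamma(0)$ vanishes by oddness and that the diagonal entries are positive via the Euler relation $\nabla Q_2(x)\cdot x = 2Q_2(x)>0$. Both arguments rest on the same symmetry input (radiality of $U$) and the positive-definiteness of $D^2V(0)$, but your route is more transparent: it bypasses differentiation in $\xi$ entirely, produces an explicit closed formula for $\Gamma$, and upgrades the conclusion from ``strict local minimum'' to ``strict global minimum'' at no extra cost. The paper's approach, on the other hand, is phrased so as to generalize immediately to the higher-order setting mentioned in the final Remark (where $Q_2$ is replaced by $D^{2m}V(0)$), though your argument adapts there just as well.
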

 \begin{proof}
By oddness, $\partial_1 \partial_2 \Gamma(0)=0$. Since $\nabla Q_2(x) \cdot x = 2 Q_2(x)>0$, we conclude that $D^2 \Gamma(0)$ is positive-definite.
 \end{proof}
 We fix a number~$\bar{\xi}>0$ in such a way that $\bar{\xi} < M$ and
 \begin{align*}
   \Gamma(\xi) > \Gamma(0)
 \end{align*}
 for every $\xi \in \overline{B} \setminus \{0\}$, where $B =
 B(0,\bar{\xi})$.
 \begin{lemma}
   For $\ge>0$ sufficiently small, there results $\Theta_\ge(0) <
   \inf_{|\xi|=\bar{\xi}} \Theta_\ge(\xi)$.
 \end{lemma}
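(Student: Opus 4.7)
The plan is to refine the expansion \eqref{eq:1} into the uniform asymptotic
\begin{equation*}
\Theta_\varepsilon(\xi) = b_0 + \frac{\varepsilon^2}{4}\Gamma(\xi) + o(\varepsilon^2),
\end{equation*}
valid on $\overline{B}$, and then read off the conclusion from the preceding lemma together with compactness of the circle $\{|\xi|=\bar{\xi}\}$. The first step is a Taylor expansion of $V$ around its critical point. Because $V(0)=1$, $\nabla V(0)=0$ and $D^2V$ is bounded and continuous, Taylor's theorem provides the pointwise identity
\begin{equation*}
V(\varepsilon x) - 1 = \frac{\varepsilon^2}{2}Q_2(x) + \varepsilon^2 |x|^2 r(\varepsilon,x),
\end{equation*}
with $r(\varepsilon,x)$ uniformly bounded and $r(\varepsilon,x)\to 0$ as $\varepsilon\to 0$ for each fixed $x$. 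The same assumption $\nabla V(0)=0$ improves the bound of Proposition~\ref{prop:3.6} to $\|w(\varepsilon,\xi)\| \leq C\varepsilon^2$, uniformly for $|\xi| \leq M$.

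Next I would expand $|z_\xi+w|^2 = |z_\xi|^2 + 2 z_\xi w + w^2$ inside the integral of \eqref{eq:1}. A Cauchy--Schwarz estimate combined with the pointwise control $|V_\varepsilon-1| \leq C\varepsilon^2|x|^2$, the uniform boundedness of $V_\varepsilon-1$, the exponential decay of $U$ from Theorem~\ref{th:3.2} and the bound $\|w\|\leq C\varepsilon^2$ shows that the two $w$-dependent terms contribute $O(\varepsilon^4)=o(\varepsilon^2)$, uniformly in $|\xi|\leq M$. For the leading piece I would write
\begin{equation*}
\int_{\R^2}(V_\varepsilon-1)|z_\xi|^2\,dx = \frac{\varepsilon^2}{2}\Gamma(\xi) + \varepsilon^2\int_{\R^2}|x|^2 r(\varepsilon,x)|z_\xi|^2\,dx,
\end{equation*}
and argue that the remainder is $o(\varepsilon^2)$ uniformly on $\overline{B}$ by splitting the integration into a large ball $\{|x|\leq R\}$, on which $r(\varepsilon,\cdot)\to 0$ uniformly, and its complement, where the exponential tail of $U$ controls the integral uniformly in $|\xi|\leq M$.

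Plugging these estimates into \eqref{eq:1} produces the asserted expansion of $\Theta_\varepsilon$. The preceding lemma and continuity of $\Gamma$ on the compact circle $\{|\xi|=\bar{\xi}\}$ then furnish $\delta := \min_{|\xi|=\bar{\xi}}(\Gamma(\xi)-\Gamma(0)) > 0$, so
\begin{equation*}
\Theta_\varepsilon(\xi) - \Theta_\varepsilon(0) \geq \frac{\varepsilon^2}{4}\delta + o(\varepsilon^2),
\end{equation*}
uniformly in $|\xi|=\bar{\xi}$, which is strictly positive for $\varepsilon$ small. Taking the infimum over the circle then yields the claim. The main technical point is ensuring that the $o(\varepsilon^2)$ in the asymptotic expansion is uniform in $\xi$; this hinges decisively on the improved estimate $\|w\|=O(\varepsilon^2)$ (a consequence of $\nabla V(0)=0$, without which one would only obtain $O(\varepsilon)$) and on the exponential decay of $U$, which controls the $x$-tails uniformly for $|\xi|$ bounded.
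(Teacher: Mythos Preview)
Your proof is correct and follows essentially the same route as the paper's: expand \(\Theta_\varepsilon\) via \eqref{eq:1}, use the second-order Taylor expansion of \(V\) at the origin (where \(\nabla V(0)=0\)) together with the improved bound \(\|w\|=O(\varepsilon^2)\) to extract a leading term proportional to \(\Gamma(\xi)\), and then conclude from the strict positivity of \(\Gamma(\xi)-\Gamma(0)\) on the circle \(|\xi|=\bar\xi\). You are simply more explicit than the paper about the uniformity in \(\xi\) of the \(o(\varepsilon^2)\) remainder and about the compactness argument on the circle (and your coefficient \(\tfrac{1}{4}\) is in fact the correct one, the paper's \(\tfrac{1}{2}\) being a harmless slip).
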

 \begin{proof}
   We recall the asymptotic expansion (\ref{eq:1}) and observe that
   \begin{align*}
     \lim_{\ge \to 0} \frac{1}{2\ge^{2}} \int_{\mathbb{R}^2} \left( V_\ge -1 \right)
     |z_\xi+w|^2 \, dx = \frac{1}{2} \int_{\mathbb{R}^2} Q_2 |z_\xi|^2
     \, dx = \frac{1}{2} \Gamma(\xi).
   \end{align*}
   Hence
   \begin{align*}
     \Theta_\ge(\xi) - \Theta_\ge (0) = \frac{1}{2} \ge^2 \left(
     \Gamma(\xi)-\Gamma(0) \right) + o(\ge^2).
   \end{align*}
   It now follows from the choice of $\bar{\xi}$ that $\Theta_\ge(\xi) -
   \Theta_\ge (0) > 0$ if $|\xi| = \bar{\xi}$ and $\ge>0$ is small
   enough. The proof is complete.
 \end{proof}
 \begin{proof}[Proof of Theorem \ref{th:main}]
   We have just shown that the function $\Theta_\ge$ must have a minimum
   at some $\xi = \xi(\ge)$ in the ball $B \subset B(0,M)$. This gives rise to a
   critical point $u_\ge = z_\xi+w(\ge,\xi) \in Z^\ge$ of the functional $I_\ge$
   with $\ge \sim 0$. Now, for every $\xi \in \overline{B}$,
   \begin{align*}
     0 \leq \Theta_\ge(\xi)-\Theta_\ge(\xi(\ge)) = \frac{1}{2} \ge^2
     \left( \Gamma(\xi) - \Gamma(\xi(\ge) \right) + o(\ge^2);
   \end{align*}
   as $\ge \to 0$, we may assume that $\xi(\ge) \to \xi_0$ and we
   obtain $\Gamma(\xi) - \Gamma(\xi_0) \geq 0$ for every $\xi \in
   \overline{B}$. Our choice of $\bar{\xi}$ forces $\xi_0=0$, so that
   $\xi(\ge) \to 0$ as $\ge \to 0$. Hence $u_\ge = z_{\xi(\ge)} +
   w(\ge,\xi(\ge)) \to U$.
   
   Coming back to the system (\ref{sys:s-n-red90}) we obtain the
   existence of pairs of solution $(v_\varepsilon, E_\varepsilon)$
   where
   \begin{align*}
   v_\varepsilon(x) = u_\varepsilon \left( \frac{x}{\varepsilon} \right) \simeq U \left(\frac{x}{\varepsilon} \right)
   \end{align*}
   and
   \begin{align*}
   E_\varepsilon (x)&= \omega \left(\frac{x}{\varepsilon} \right)= 
   -\int_{\R^2} \log \left|\frac{x}{\varepsilon} -y \right|
   |u_\varepsilon(y)|^2 \,dy \\
   &= 
   - \frac{1}{\varepsilon^2}  \int_{\R^2} \log
   \frac{|x-z|}{\varepsilon}
   \left|u_\varepsilon\left(\frac{z}{\varepsilon}\right)
   \right|^2 \,dz \\
   &= 
      \frac{1}{\varepsilon^2}\int_{\R^2} \log \frac{\varepsilon}{|x-z|} |v_\varepsilon(z)|^2 \,dz .
   \end{align*}
   Therefore we have
   $E_\varepsilon (x) =  \frac{1}{\varepsilon^2}\int_{\R^2} \log \frac{1}{|x-z|}
   |v_\varepsilon(z)|^2 \,dz + c_\varepsilon$, with
   $c_\varepsilon =\frac{\log \varepsilon}{\varepsilon^2}\| v_\varepsilon\|^2_2$.
   \end{proof}
 \begin{remark}
   Our Theorem \ref{th:main} can be slightly generalized. Indeed, we
   can assume that the potential $V$ has a non-degenerate critical
   point at some $x_0$, in the sense $\nabla V(x_0)=0$ and there
   exists an integer $m \geq 1$ such that $D^{2m} V(x_0)$ is either
   positive- or negative-definite. The proof then requires only a
   higher-order expansion of $I_\ge(z+w)$ in $\ge$. We omit the
   details for brevity.
 \end{remark}
\bigskip
\bigskip 
{\bfseries Acknowledgments.}
The authors wish to thank the referee  for the careful reading and the comments.

The first author is supported by PDR T.1110.14F (FNRS).
The second author is supported by PRIN 2017JPCAPN
``Qualitative and quantitative aspects of nonlinear PDEs'' and by INdAM-GNAMPA. The third author is supported by INdAM-GNAMPA. 
A part of this work was done during the second author was visiting D\'epartement de Math\'ematique, Universit\'e Libre de Bruxelles. She would like to 
thank D\'epartement de Math\'ematique, Universit\'e Libre de Bruxelles, for their
hospitality and support.

\vspace{2mm}

\bigskip
\bigskip

\end{document}